\documentclass[12pt]{article}
\usepackage{latexsym}
\usepackage{amssymb,amsfonts,enumerate}
\usepackage{amsmath}
\newtheorem{theorem}{Theorem}

\newtheorem{lemma}{Lemma}

\newtheorem{prop}{Proposition}
\newtheorem{cor}{Corollary}


\newenvironment{proof}{\medskip \noindent
{\bf Proof.}}{\hfill \rule{.5em}{1em}
\\}

\def\bea{\begin{eqnarray*}}
\def\eea{\end{eqnarray*}}
\def\be{\begin{equation}}
\def\ee{\end{equation}}

\pagestyle{myheadings}


\begin{document}

\title{Vacuum static spaces  with harmonic curvature}

\author{ Jongsu Kim
\thanks{This research was supported by Basic Science Research Program through the National Research Foundation of Korea (NRF) funded by the Ministry of Science, ICT and Future Planning (NRF-2020R1A2B5B01001862).
Keywords: gradient Ricci soliton,  harmonic Weyl curvature, MS Classification(2010): 53C21, 53C25} }



\maketitle

\begin{abstract}
 In this article we make a thorough classification of (not necessarily complete) $n$-dimensional vacuum  static spaces  $(M,g,f)$ with harmonic curvature and, as a corollary,  obtain  a classification of  complete  vacuum  static spaces  with harmonic curvature.
Indeed, we showed that $(M,g,f)$ is locally isometric to  one of the following four types;  (i)  the Riemannian product of an Einstein manifold and a vacuum static space,
(ii)  the warped product over an interval with an Einstein manifold as fiber,
(iii)  an  Einstein manifold,
(iv) the Riemannian product of two Einstein manifolds.
\end{abstract}



\newtheorem{conj}[theorem]{Conjecture}

\section{Introduction}

An  $n$-dimensional Riemannian manifold $(M,g)$ is said to be a vacuum static space if it admits
a non-constant smooth function $f$  satisfying the following equation
\begin{eqnarray} \label{0002bx}
\nabla df = f(r -\frac{R}{n-1} g)  ,
\end{eqnarray}
where $\nabla $, $r$ and  $R$ is the Levi-Civita connection,  Ricci tensor and  scalar curvature of $g$, respectively.

Vacuum static spaces are closely related to  static space-times satisfying the vacuum Einstein field equation in general relativity \cite{HE, KOb}.
Indeed, for a Riemannian manifold $(M, g)$ with a positive function $f$,  the Lorentzian metric $\hat{g}  = - f^2 dt^2 +  g$ on  $ I \times M  $ for an interval $I$
is Einstein
    if and only if (\ref{0002bx}) holds for $(M, g, f) \ $ \cite[Proposition 2.7]{Co}.
There is an extensive literature on vacuum static spaces.
Among them, a short list concerned with general relativity aspect is as follows:
\cite{ACD, BM, CS,  HE, Is, JEK, Li, Wad}.

\medskip
A fundamental Riemannian geometric importance of  vacuum static spaces comes from the fact that
the derivative of the scalar curvature functional, deﬁned on the space $\mathcal{M}$ of smooth Riemannian
metrics on a closed manifold $M$, is surjective at $g \in \mathcal{M}$  if there is no nonzero
smooth function $f$ on $M$ satisfying (\ref{0002bx}); see \cite[Chapter 4]{Be} and  \cite{Bou, FM}.
A local version of this fact still holds in appropriate sense;  see \cite[Theorem 1]{Co} when $M$ is
a compactly contained subdomain  of a smooth manifold.
This local viewpoint has been
studied to make remarkable progress in Riemannian and Lorentzian geometry \cite{CIP,   CEM,  QY2}.

\smallskip
It is interesting to study the classification of local and complete vacuum static spaces under natural geometric or topological assumptions.
 Kobayashi \cite{Ko} and Lafontaine \cite{La}
independently proved a classiﬁcation of closed locally conformally ﬂat vacuum static spaces; all the local ones are also thoroughly described  in  \cite{Ko}.
Qing and Yuan \cite{QY2} classiﬁed complete Bach-ﬂat vacuum static spaces  with compact
level sets of $f$; in particular these spaces have  harmonic curvature and have at most two distinct Ricci-eigen values at each point. Baltazar, Barros, Batista and Viana \cite{BBBV}  and Ye \cite{Ye} proved that $n$-dimensional, $n\geq 5$, compact vacuum static spaces with positive scalar curvature and zero radial Weyl curvature are Bach-ﬂat. Hwang and Yun \cite{HY2} showed that $n$-dimensional, $n\geq 5$,
vacuum static spaces with vanishing of complete divergence of Bach tensor
and Weyl tensor have harmonic curvature if it has compact
level sets of $f$.


Meanwhile, for closed three dimensional vacuum static spaces, Ambrozio \cite{Am} studied topological and geometrical properties; see also Shen \cite{She}.


\medskip

In this paper we intend to classify vacuum static spaces with harmonic curvature.
The harmonic curvature condition is  interesting for its own sake. Riemannian manifolds with harmonic curvature are characterized when they admit at most two distinct Ricci eigenvalues at each point \cite[Chap. 16]{Be} and  \cite{De}.

Four dimensional vacuum static spaces  with harmonic curvature  have been classified  in \cite{KS}.
The approach therein
depends on eigenvalue analysis based on the Codazzi tensor $r$. Although it was effective enough to yield explicit local and global description of the spaces, the computation becomes much harder in higher dimension, leaving an interesting challenge. Here we overcome the difficulty and achieve a thorough classification without dimensional restriction, Recall that  a $3$-dimensional Riemannian manifold $(M, g)$ has  harmonic curvature if and only if it is locally conformally flat.

\begin{theorem} \label{locals}
Let $(M^n, g, f)$ be a (not necessarily complete) $n$-dimensional, $n \geq 3$, vacuum static space with harmonic curvature.
Then for each point in some open dense subset of $M$, there exists a neighborhood $V$ such that
 one of the following four assertions holds:

{\rm  (i)}  there are Einstein manifolds $(N^{k-1},\tilde{g}_1)$ and $(U^{n-k},  \tilde{g}_2)$ with the Ricci tensor relation $r^{\tilde{g}_1} =  (k-2) k_2 \tilde{g}_1$ and $r^{\tilde{g}_2} =  (n-k-1) k_n \tilde{g}_2$ for constants $k_2, k_n$ such that

$(V, g)$ is isometric to a domain in the Riemannian product of $(N^{k-1}, p^2 \tilde{g}_1)$ for a constant $p$ and a vacuum static space $(W^{n-k+1}:=I \times U^{n-k}, \ ds^2+h(s)^2\tilde{g}_2, \ f) $ which is a warped product over an open interval $I$; the function  $h$ on $I$ satisfies with constants $c_2$ and $c_3$
\begin{eqnarray} \label{e2}
 \ \ \ (h')^2  + \frac{Rh^2}{(n-1)(n-k+1)}   +\frac{2c_2h^{-n+k+1}}{(n-k-1)}=k_n, \  {\rm for}\  \ n-k \geq 2,  \\
(h')^2  + \frac{R}{2(n-1)} h^2  =c_3,  \ \ \ \ {\rm for} \ n-k =1. \  \nonumber  \hspace{3cm}
\end{eqnarray}
 Moreover,  $f = c \cdot h^{'}(s)$ for a constant $c$,  and  $p$ satisfies
  \begin{eqnarray}
\frac{(k-2)}{p^2} k_2  = \frac{R}{n-1}.
\end{eqnarray}

\medskip
{\rm  (ii)} $(V, g)$ is isometric to a domain in the warped product $( I \times N^{n-1},  ds^2+h(s)^2g_N) $ where $g_N$ is an Einstein metric with the Ricci tensor $r^{g_N} = (n-2)k g_N$ for a constant $k$. The function $h$  satisfies $(h')^2  + \frac{R}{n(n-1)} h^2  +2\frac{a}{(n-2)h^{n-2}}=k$, for a constant $a$.

\medskip
{\rm  (iii)}
$(V, g)$ is Einstein and a warped product over an interval;  $g=ds^2 + (f^{'}(s))^2 \tilde{g}$, where  $ \tilde{g}$ is Einstein.  And $f$ satisfies
$ f^{''} = - \frac{R }{n(n-1)}f$.

\medskip
{\rm  (iv)} $(V, g)$ is isometric to a domain in the Riemannian product of two Einstein manifolds  $(N_1^k , g_1) $  and  $(N_2^{n-k} , g_2) $ with their Ricci tensors satisfying $r_{g_1} = (1-  \frac{1}{k} ) \frac{R}{n-1} g_1$ and  $r_{g_2} =  \frac{R}{n-1} g_2$. Moreover, $f'' =   - \frac{R}{k(n-1)} f$.


\end{theorem}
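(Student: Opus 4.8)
The plan is to turn equation \eqref{0002bx} into ODE data on the level hypersurfaces of $f$ and then read off the four models. First I would record the standard consequences of the static equation: tracing \eqref{0002bx} gives $\Delta f=-\tfrac{R}{n-1}f$, while the contracted second Bianchi identity together with \eqref{0002bx} forces $R$ to be locally constant, so that harmonic curvature is equivalent to the Ricci tensor $r$ being a Codazzi tensor. The decisive first step is to differentiate \eqref{0002bx} once more, antisymmetrize, and use the Codazzi identity $(\nabla_{X}r)(Y,\cdot)=(\nabla_{Y}r)(X,\cdot)$ to cancel the $\nabla r$ terms; this produces a fundamental identity expressing $R(X,Y,\nabla f,\cdot)$ in terms of $r$, $g$ and $df$. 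Appropriate contractions of it then yield, on the open set $M^{*}=\{\nabla f\neq 0\}$: that $\nabla f$ is an eigenvector of $r$ (so $r$ preserves both $\mathbb{R}\nabla f$ and the tangent spaces $T\Sigma$ of the level sets $\Sigma=\{f=\text{const}\}$); that $\nabla|\nabla f|^{2}$ is a multiple of $\nabla f$, hence $|\nabla f|$ is locally a function of $f$; that the mixed curvatures $R(X,Y,Z,\nabla f)$ vanish whenever $X,Y,Z\in T\Sigma$; and that the shape operator of $\Sigma$ is $S=\tfrac{f}{|\nabla f|}\bigl(r|_{T\Sigma}-\tfrac{R}{n-1}\,\mathrm{Id}\bigr)$.

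Next I would pass to the open dense subset $M_{1}\subset M^{*}$ on which the eigenvalues of $r$ have locally constant multiplicities, so that on a neighborhood $V$ of a given point the eigendistributions $D_{0}\supset\mathbb{R}\nabla f,\,D_{1},\dots,D_{m}$ are smooth. Derdzinski's structure theory for Codazzi tensors (see \cite[Ch.~16]{Be} and \cite{De}) then gives that each $D_{a}$ and each sum $\bigoplus_{b\neq a}D_{b}$ is integrable, that an eigenvalue of multiplicity $\ge 2$ is constant along the leaves of its own distribution, and that those leaves are totally umbilic in $(V,g)$. Feeding the vanishing mixed curvatures and the explicit shape operator $S$ back in, I would show that for $a\neq 0$ the leaves of $D_{a}$ have mean curvature vector proportional to $\nabla f$ and the eigenvalue $\lambda_{a}$ is constant in all directions transverse to $\nabla f$ — precisely the infinitesimal data of a warped product over the $\nabla f$-direction with Einstein fibers. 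Invoking the de Rham decomposition together with the warped-product recognition theorem (after Hiepko) realizes $(V,g)$ as a Riemannian or warped product whose fibers are Einstein and whose base carries $\nabla f$.

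Finally I would run the case analysis according to the number $m$ of nonradial eigenvalue blocks, the dimension of $D_{0}$, and which eigenvalue $\nabla f$ realizes. If $r$ has a single eigenvalue on $V$, then $(V,g)$ is Einstein, \eqref{0002bx} becomes $\nabla df=-\tfrac{R}{n(n-1)}fg$, and the classical description of an Einstein manifold carrying such a function gives $g=ds^{2}+(f')^{2}\tilde g$ with $\tilde g$ Einstein — model (iii). If $\dim D_{0}=1$, the $\nabla f$-line is a genuine interval factor and $V$ is a warped product over it; substituting $g=ds^{2}+h(s)^{2}g_{N}$ and $f=f(s)$ into \eqref{0002bx} and using $R\equiv\text{const}$ yields model (ii) when there is a single nonradial block, and if in addition a constant-eigenvalue block of Ricci-eigenvalue $\tfrac{R}{n-1}$ is present it splits off as a Riemannian Einstein factor $N^{k-1}$ on which $z:=r-\tfrac{R}{n-1}g$ vanishes, leaving the warped-product vacuum static piece $W=I\times_{h}U$ and giving model (i); in each case one obtains the ODEs \eqref{e2}, the identity $f=c\,h'$, and the stated value of $p$. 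If instead $\dim D_{0}\ge 2$ with $\nabla f$ in a constant-eigenvalue block, then $z$ is block-diagonal with one vanishing block, $(V,g)$ is a Riemannian product $N_{1}^{k}\times N_{2}^{n-k}$ of Einstein manifolds, $f$ depends only on $N_{1}$ with $\nabla df=-\tfrac{R}{k(n-1)}fg_{1}$, and matching constants gives model (iv). The step I expect to be hardest is the one sketched in the second paragraph: extracting honest (warped-)product splittings from the Codazzi and umbilicity data in \emph{every} eigenvalue configuration while simultaneously tracking when \eqref{0002bx} forces a factor to be Einstein with a prescribed constant as opposed to leaving a nontrivial warping — this is exactly where the higher-dimensional difficulty noted in the introduction concentrates. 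A minor supplementary point is to verify that $M^{*}$ is dense (using analyticity of $g$ and $f$) and that the union over all configurations of the good neighborhoods $V$ is dense in $M$.
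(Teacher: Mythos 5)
Your structural setup (the curvature identity from antisymmetrizing the static equation, the eigenvector property of $\nabla f$, the Codazzi/Derdzi\'nski theory of eigendistributions with totally umbilic leaves, and the level-set geometry) matches Sections 2--3 of the paper. But there is a genuine gap at the heart of the argument: your case analysis in the final paragraph silently assumes that there is at most one ``nonradial'' eigenvalue block with a nontrivial warping, i.e.\ you only treat the configurations ``single warped block,'' ``single warped block plus one constant-eigenvalue block,'' and ``no warped block.'' Nothing in your second paragraph rules out the a priori possibility of two or more eigendistributions $D_1,\dots,D_m$ whose leaves are umbilic with \emph{distinct} mean-curvature functions $\zeta_a$ along $\nabla f$; your own machinery (de Rham plus Hiepko) would then only deliver a multiply warped product $ds^2+\sum_a h_a(s)^2 g_a$ with several distinct warping functions, which is not among the four models (i)--(iv). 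Eliminating exactly this configuration is the entire technical content of the paper: it is Proposition 1 (``there exists exactly one distinct nonzero $\zeta_i$''), whose proof occupies Sections 4--7. The mechanism is not soft structure theory but a hard computation: one writes $\zeta_i=h'/(h+c_i)$ with $h'=f$, computes all connection components $\Gamma_{ij}^k$ in a refined adapted frame as explicit rational functions of $h$ times functions of the level-set variables, substitutes into the static equation for $R_{ii}$, and then exploits the linear independence of the functions $(h+c_{[l]})^{-t}$ and $\ln|h+c_{[l]}|/(h+c_{[s]})$ (Lemmas 9 and 11 of the paper) to force all the constants $c_{[l]}$ to coincide. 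Your proposal contains no substitute for this step, so as written it proves at most a multiply-warped normal form, not the stated four-case classification.

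A secondary, smaller point: once ``at most three distinct Ricci eigenvalues'' is established, the paper does not rederive the models from scratch but cites the prior classification of vacuum static spaces with harmonic curvature and at most three Ricci eigenvalues (\cite{Ki2,Ki3}) to obtain (i)--(iv), including the ODEs \eqref{e2} and the constants $p$, $c$. Your sketch of substituting $g=ds^2+h(s)^2g_N$ into \eqref{0002bx} would recover cases (ii)--(iv) with some work, but for case (i) you would additionally need to justify the splitting-off of the flat-in-$s$ Einstein factor and verify the spherical-fiber hypothesis of Hiepko's theorem, neither of which is automatic from umbilicity alone. These are repairable; the missing bound on the number of warped blocks is not.
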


We note that most of the spaces in the case (i) have three distinct Ricci eigenvalues at some point and that some spaces in the case (iv) is not Bach-flat.
Kobayashi's paper \cite{Ko} has described  the above case (ii) in concrete detail
in the context of $g$ being locally conformally flat.
As a corollary to Theorem \ref{locals}, we obtain the classification of complete vacuum static space with harmonic curvature.

\begin{cor} \label{complete}
Let $(M^n, g, f)$ be a  complete $n$-dimensional vacuum static space with harmonic curvature.
Then it is one of the following:

{\rm  (i)}
$(V, g)$ is isometric to a quotient of the Riemannian product of an Einstein manifold  $(N^{k-1}, {g}_1)$  and a vacuum static space $(W^{n-k+1}:=\mathbb{R}^1 \times U^{n-k}, \ ds^2+h(s)^2\tilde{g}_2, \ f) $ which is a warped product over $\mathbb{R}^1$.

\medskip
{\rm  (ii)} $(V, g)$ is isometric to  a quotient of  the warped product $( \mathbb{R}^1 \times N^{n-1},  ds^2+h(s)^2g_N) $ over an interval $\mathbb{R}^1$ with Einstein manifold $ (N^{n-1} , g_N) $ as fiber.

\medskip
{\rm  (iii)}
$(V, g)$ is Einstein.

\medskip
{\rm  (iv)} $(V, g)$ is isometric to a quotient of the Riemannian product of two Einstein manifolds  $(N_1^k , g_1) $  and  $(N_2^{n-k} , g_2) $.

\end{cor}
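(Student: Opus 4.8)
The plan is to promote the local classification of Theorem~\ref{locals} to a global one by combining real-analyticity with the de Rham decomposition theorem and an analysis of the warping ODEs. First I would record that a vacuum static space with harmonic curvature is real-analytic: in harmonic coordinates the metric satisfies an elliptic system, and $\delta r = 0$ together with \eqref{0002bx} forces both $g$ and $f$ to be real-analytic. Consequently the subset of $M$ on which a given one of the four local models of Theorem~\ref{locals} holds is not merely open and dense but its closure is a union of connected components. Passing to the universal Riemannian cover $\tilde M$ (which is again a complete vacuum static space with harmonic curvature), I may therefore assume that exactly one of the four local descriptions holds on a dense open subset of all of $\tilde M$, and it suffices to classify $\tilde M$ and then descend to the quotient $M$.

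In the product cases (i) and (iv), the local model supplies two complementary parallel distributions on the dense open set; by analyticity these parallel distributions, hence the associated de Rham splitting, extend across all of $\tilde M$. Since $\tilde M$ is complete and simply connected, the de Rham decomposition theorem yields a genuine global Riemannian product $\tilde M = N_1 \times N_2$. Inserting this splitting into \eqref{0002bx} and invoking Theorem~\ref{locals} identifies the factors: in case (iv) both are complete Einstein manifolds with the stated Einstein constants, and in case (i) one factor is a complete Einstein manifold while the other is a complete vacuum static space with harmonic curvature which is itself a warped product over an interval. This gives assertions (i) and (iv) of the corollary after quotienting back to $M$.

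For the warped-product cases (ii) and (iii), the globally available object is the function $f$ (equivalently $h'$, via $f = c\,h'$ in (i)--(ii) and $f'' = -\frac{R}{n(n-1)}f$ in (iii)): its gradient is everywhere tangent to the interval factor, its integral curves are geodesics, and $|\nabla f|$ is a function of $f$ alone, so on the locus $\nabla f \neq 0$ the arclength along these geodesics defines a global parameter $s$ whose regular level sets are the Einstein fibers. Completeness then forces the range of $s$ to be all of $\mathbb{R}$ unless the warping function $h$ tends to $0$ at a finite end of $I$; in the latter case the metric extends smoothly across a totally geodesic pole, the closed-up piece is rotationally symmetric, and rotational symmetry together with harmonic curvature makes it Einstein, so it is absorbed into case (iii). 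In the remaining situation $I = \mathbb{R}^1$, $h$ stays positive, and one reads off assertion (ii); for (iii) the Einstein condition survives any smooth completion and nothing more is needed. In every case $M$ is recovered as a quotient of $\tilde M$.

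The step I expect to be the main obstacle is the bookkeeping at the interfaces of the four cases, since the models are not mutually exclusive on small special sets: an Einstein product is a degenerate instance of both (i) and (iv), and the warped product of (i)--(ii) can itself become Einstein when $h$ solves the ODE in \eqref{e2} with a distinguished constant (which would migrate case (i) into case (iv), via the capped-off $W$ factor). The analyticity argument that pins down ``the type'' globally must be organized so as to respect these overlaps, and the capping-off analysis for the warped factors must be carried out carefully --- using the explicit ODE for $h$ in \eqref{e2} and the smooth-extension criteria $h(s_0)=0$, $h'(s_0)=\pm 1$ --- to be sure no other degenerate closings occur. Once these points are settled, the passage from Theorem~\ref{locals} to Corollary~\ref{complete} is essentially formal.
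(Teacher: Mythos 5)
Your proposal is correct in outline and follows essentially the same route as the paper, which itself only states that Corollary~\ref{complete} ``can be proved from Theorem~\ref{locals} and the real analyticity of $(M,g,f)$ by a standard argument'' and omits all details; your elaboration (analytic continuation of the local type, de~Rham splitting in the product cases, completeness of the warped parameter or smooth capping forcing $a=0$ and hence the Einstein case) is exactly the standard argument being invoked.
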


To prove Theorem \ref{locals}, we recall some argument of the four dimensional work \cite{KS}.
The gradient $\nabla f$ of the function $f$ is known to be an eigenvector field for the  Ricci tensor. We let $E_1 = \frac{\nabla f }{|\nabla f |}$ and form a Ricci-eigen orthonormal frame field $E_i$, $i=1, \cdots, n$ with
corresponding eigenvalues $\lambda_i$. We use the vacuum static space equation and the harmonic curvature equation to
write the Riemannian metric in a simple shape, following \cite{CC}.
 These two equations also provide a number of relations on  $\lambda_i$  and components $\Gamma_{ij}^k :=g(\nabla_{E_i} E_j,  E_k)$ of  the Riemannian connection.


Next, we use the refined adapted Ricci-eigen frame field  which has been developed by the author in  \cite{Ki4}. This refined field has simpler local expression and $\Gamma_{ij}^k$ is computed better. Then
the vacuum static space equation becomes tractable. The details of subsequent analyzing argument turn out to be fairly different from those of \cite{KS, Ki4}, and resolve the main difficulty arising from higher dimension.
 We could show that the number of distinct Ricci-eigenvalues at each point is at most three.
Theorem \ref{locals} is then proved by using the result of \cite{Ki2, Ki3} where vacuum static spaces with at most three Ricci eigenvalues are classified.
Corollary \ref{complete} on the  classification of complete ones follows by standard argument.

\bigskip
This paper is organized as follows. In Section 2,  we recall
some properties on vacuum static spaces with harmonic curvature. In Section 3, we derive various formulas for vacuum static space with harmonic curvature. In Section 4 we
recall refined adapted Ricci-eigen vector fields from \cite{Ki4}.
In Section 5 we
compute  $\Gamma_{ij}^k$ in refined adapted Ricci-eigen vector fields.
In Sections 6
 we compute out relations on $\lambda_i$ and $\Gamma_{ij}^k$  out of the vacuum static space equation and the harmonic curvature equation.
 In Section 7
we analyze in detail the relations of the previous section to prove the classification of local and complete vacuum static spaces with harmonic curvature.

\section{Preliminaries}

In this section we recall some known results about vacuum static spaces with harmonic curvature and add a little explanation as necessary. A Riemannian metric has harmonic curvature if and only if
the Ricci tensor  is a Codazzi tensor, written in local coordinates as $\nabla_k r_{ij}  = \nabla_i r_{kj}$, \cite[Chap. 16]{Be}.
A Riemannian manifold with harmonic curvature is real analytic in harmonic coordinates \cite{DG}.
Below we shall denote the Ricci tensor as $r$ or $R( \cdot , \cdot )$.

 \begin{lemma} \label{threesolx}
For an $n$-dimensional manifold $(M^n, g, f)$ with harmonic curvature satisfying {\rm (\ref{0002bx})}, it holds that
\begin{eqnarray*} \label{solbax}
-R(X,Y,Z, \nabla f) = - R(X,Z) g(\nabla f, Y)  + R(Y,Z)g(\nabla f, X) \ \ \ \ \ \ \ \ \ \ \ \ \\
 - \frac{R}{n-1} \{ g(\nabla f, X)  g(Y,Z) - g(\nabla f, Y) g(X,Z) \}   .
\end{eqnarray*}
\end{lemma}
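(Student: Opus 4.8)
The plan is to differentiate the vacuum static equation (\ref{0002bx}) once more, antisymmetrize, and invoke the Ricci identity, using the two standard consequences of harmonic curvature recalled in Section~2: the Ricci tensor is Codazzi and the scalar curvature is constant.

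First I would record that $R$ is constant. Contracting the Codazzi identity $\nabla_k r_{ij}=\nabla_i r_{kj}$ with $g^{kj}$ gives $\nabla^j r_{ij}=\nabla_i R$, whereas the contracted second Bianchi identity gives $\nabla^j r_{ij}=\tfrac12\nabla_i R$; hence $\nabla_i R=0$. Therefore, writing $T:=r-\tfrac{R}{n-1}g$, one has $\nabla T=\nabla r$, so $T$ is again a Codazzi tensor, and (\ref{0002bx}) reads $\nabla df=fT$.

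Next I would take one more covariant derivative. For vector fields $X,Y,Z$,
\[
(\nabla_X\nabla df)(Y,Z)=(Xf)\,T(Y,Z)+f\,(\nabla_X T)(Y,Z),
\]
and antisymmetrizing in $X,Y$ while using that $T$ is Codazzi, i.e.\ $(\nabla_X T)(Y,Z)=(\nabla_Y T)(X,Z)$, cancels the two terms carrying the factor $f$:
\[
(\nabla_X\nabla df)(Y,Z)-(\nabla_Y\nabla df)(X,Z)=g(\nabla f,X)\,T(Y,Z)-g(\nabla f,Y)\,T(X,Z).
\]
On the other hand, since $\nabla df$ is the Hessian of $f$, the Ricci identity for the third covariant derivative of $f$ reads $(\nabla_X\nabla df)(Y,Z)-(\nabla_Y\nabla df)(X,Z)=-R(X,Y,Z,\nabla f)$ in the sign convention of the paper. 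Equating the two expressions and substituting $T(Y,Z)=R(Y,Z)-\tfrac{R}{n-1}g(Y,Z)$ gives precisely the claimed identity.

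I do not expect a genuine obstacle here; the only thing to handle with care is the bookkeeping of conventions — using the Codazzi identity in the correct pair of slots and matching the Ricci identity to the curvature sign used in the statement. Also, since the resulting identity is tensorial it holds on all of $M$ regardless of the zero set of $\nabla f$, so no separate argument about where $\nabla f$ vanishes is needed.
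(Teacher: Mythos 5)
Your proof is correct and is essentially the paper's argument: the paper simply defers to Lemma 2.2 of \cite{KS}, whose proof is exactly this — note $R$ is constant under harmonic curvature, differentiate $\nabla df=f(r-\tfrac{R}{n-1}g)$ once more, antisymmetrize, cancel the $f$-terms via the Codazzi property of $r$, and identify the antisymmetrized third derivative with the curvature term by the Ricci identity. The only point to watch is the sign convention in the Ricci identity, which you correctly flag.
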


\begin{proof}
The proof is the same as that of Lemma 2.2 of \cite{KS} except the difference of dimensions.
\end{proof}

 \begin{lemma}[Lemma 2.3, \cite{KS}] \label{threesolbx}
Let  $(M^n, g, f)$ have harmonic curvature, satisfying {\rm (\ref{0002bx})}. Let $c$ be a regular value of $f$ and $\Sigma_c= \{ x | f(x) =c  \}$  be the level surface of $f$. Then the following assertions hold:

{\rm (i)} Where $\nabla f \neq 0$,  $E_1 := \frac{\nabla f }{|\nabla f  | }$ is an eigenvector field of $r$.

{\rm (ii)} $ |\nabla f|$  is constant on a connected component of $\Sigma_c$.

{\rm (iii)} There is a function $s$ locally defined with   $s(x) = \int  \frac{   d f}{|d f|} $, so that

$ \ \ \ \ ds =\frac{   d f}{|d f|}$ and $E_1 = \nabla s$.

{\rm (iv)}  $R({E_1, E_1})$ is constant on a connected component of $\Sigma_c$.

{\rm (v)}  Near a point in $\Sigma_c$, the metric $g$ can be written as

$\ \ \ g= ds^2 +  \sum_{i,j > 1} g_{ij}(s, x_2, \cdots  x_n) dx_i \otimes dx_j$, where
    $x_2, \cdots  x_n$ is a local

 $ \ \ \ $   coordinates system on $\Sigma_c$.

{\rm (vi)}  $\nabla_{E_1} E_1=0$.
\end{lemma}

 For a point $x$ in $M$, let  $E_r(x)$ be the number of distinct eigenvalues of the Ricci tensor $r_x$,
and set $M_r = \{    x \in M \  |  \  E_r {\rm \ is \ constant \ in \ a \ neighborhood of \ } x \}$, following Derdzi\'{n}ski  \cite{De}, so that $M_r$ is an open dense subset of $M$.
 Then we have;

\begin{lemma}[Lemma 2.4, \cite{KS}] \label{abc60x} For a Riemannian metric $g$ of dimension $n \geq 4$ with harmonic curvature, consider orthonormal vector fields $E_i$, $i=1, \cdots n$ such that
$R(E_i, \cdot ) = \lambda_i g(E_i, \cdot)$. Then the followings hold
in each connected component of $M_r$;

\bigskip
\noindent {\rm (i)}
 $(\lambda_j - \lambda_k ) \langle \nabla_{E_i} E_j, E_k \rangle   + {E_i} \{R(E_j, E_k)   \}=(\lambda_i - \lambda_k ) \langle \nabla_{E_j} E_i, E_k\rangle + {E_j} \{R(E_k, E_i)   \}, \ \ $

 for any $i,j,k =1, \cdots n$.

\smallskip
\noindent {\rm (ii)}  If $k \neq i$ and $k \neq j$,
$ \ \ (\lambda_j - \lambda_k ) \langle \nabla_{E_i} E_j, E_k\rangle=(\lambda_i - \lambda_k ) \langle \nabla_{E_j} E_i, E_k\rangle .$

\smallskip
\noindent {\rm (iii)} Given distinct eigenfunctions $\lambda$ and $ \mu$ of the Ricci tensor $r$ and local vector fields $v$ and $ u$ such that  $r v = \lambda v$, $ru = \mu u$ with $|u|=1$, it holds

$ \ \ \ \ \  v(\mu) = (\mu - \lambda) <\nabla_u u, v > $.

\smallskip
\noindent {\rm (iv)} For each eigenfunction $\lambda$ of $r$, the $\lambda$-eigenspace distribution is integrable and its leaves are totally umbilic submanifolds of $M$.

\end{lemma}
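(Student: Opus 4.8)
The plan is to deduce all four assertions from the single structural fact underlying harmonic curvature, namely that the Ricci tensor $r$ is a Codazzi tensor, $(\nabla_X r)(Y,Z)=(\nabla_Y r)(X,Z)$ for all vector fields $X,Y,Z$. Before starting I would record two standing facts, valid on a connected component of $M_r$: first, since the number $E_r$ of distinct Ricci eigenvalues is constant there, each eigenvalue has constant multiplicity, so — using that a metric with harmonic curvature is real analytic in harmonic coordinates — the eigenvalues are smooth functions and the eigendistributions are smooth subbundles, which is what makes all the local eigenvector fields and expressions like $v(\mu)$ below meaningful; second, because $r$ is $g$-symmetric, eigenvectors belonging to distinct eigenvalues are automatically $g$-orthogonal.

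For (i), I would take the Ricci-eigen orthonormal frame $E_i$ with $rE_l=\lambda_l E_l$ and expand $(\nabla_{E_i}r)(E_j,E_k)=E_i\{R(E_j,E_k)\}-r(\nabla_{E_i}E_j,E_k)-r(E_j,\nabla_{E_i}E_k)$. Writing $\nabla_{E_i}E_j$ in the frame, using $rE_l=\lambda_l E_l$, and using $\langle \nabla_{E_i}E_j,E_k\rangle=-\langle E_j,\nabla_{E_i}E_k\rangle$ (from constancy of $\langle E_j,E_k\rangle$), the two connection terms collapse, giving $(\nabla_{E_i}r)(E_j,E_k)=E_i\{R(E_j,E_k)\}+(\lambda_j-\lambda_k)\langle\nabla_{E_i}E_j,E_k\rangle$. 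Substituting this into the Codazzi identity $(\nabla_{E_i}r)(E_j,E_k)=(\nabla_{E_j}r)(E_i,E_k)$ and using $R(E_i,E_k)=R(E_k,E_i)$ yields (i) verbatim. Part (ii) is then the specialization $k\neq i$, $k\neq j$: if $i=j$ the identity is a tautology, while if $i,j,k$ are distinct then $R(E_j,E_k)=R(E_k,E_i)=0$, so the two derivative terms in (i) drop out, leaving exactly (ii).

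For (iii), I would apply the Codazzi identity to the triple $(v,u,u)$, i.e.\ $(\nabla_v r)(u,u)=(\nabla_u r)(v,u)$. Expanding the left side and using $r(u,u)=\mu$ (because $|u|=1$) together with $\langle\nabla_v u,u\rangle=0$ gives $v(\mu)$; expanding the right side and using $r(u,v)=0$ (distinct eigenvalues force $u\perp v$) and $\langle\nabla_u v,u\rangle=-\langle v,\nabla_u u\rangle$ gives $(\mu-\lambda)\langle\nabla_u u,v\rangle$, which is the claimed formula. For (iv), fix an eigenvalue $\lambda$ with eigendistribution $D_\lambda$; it suffices to test against each eigendistribution $D_\mu\subset D_\lambda^\perp$, $\mu\neq\lambda$. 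For sections $X,Y$ of $D_\lambda$ and $Z$ of $D_\mu$, the Codazzi identity $(\nabla_Z r)(X,Y)=(\nabla_X r)(Z,Y)$, after expanding both sides with $r(X,Y)=\lambda\langle X,Y\rangle$, $r(Z,Y)=0$, metric compatibility, and $\langle\nabla_X Z,Y\rangle=-\langle Z,\nabla_X Y\rangle$, reduces to $Z(\lambda)\langle X,Y\rangle=(\lambda-\mu)\langle\nabla_X Y,Z\rangle$, hence $\langle\nabla_X Y,Z\rangle=\frac{Z(\lambda)}{\lambda-\mu}\langle X,Y\rangle$. The right-hand side is symmetric in $X,Y$, so $\langle[X,Y],Z\rangle=0$ and $D_\lambda$ is integrable by Frobenius; the same formula exhibits the second fundamental form of a leaf in the normal direction $Z$ as a scalar multiple of the induced metric, so the leaves are totally umbilic.

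The individual computations are routine once the Codazzi identity is available; I expect the only genuinely delicate point to be the preliminary regularity statement — that on $M_r$ the Ricci eigenvalues vary smoothly and the eigendistributions form smooth bundles — which rests on the constant-multiplicity consequence of staying inside $M_r$ together with the analyticity of harmonic-curvature metrics, and which is exactly what gives the differentiations of $\lambda$ and $\mu$ their meaning. Since the assertions are quoted as \cite[Lemma 2.4]{KS}, one may simply cite that reference; the above is the self-contained route.
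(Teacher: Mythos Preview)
Your proof is correct and follows the standard route via the Codazzi identity for the Ricci tensor. Note that the present paper does not actually prove this lemma: it is quoted verbatim from \cite[Lemma~2.4]{KS} and left without argument, so your self-contained derivation is exactly the expected one (and is, in outline, what the cited reference does as well).
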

We use the notation $\Gamma_{ij}^k = \langle \nabla_{E_i}E_j , E_k      \rangle$.
 Lemma \ref{threesolbx} implies that
for any point $p$ in the open dense subset $M_{r} \cap \{ \nabla f \neq 0  \}$ of $M^n$,
there is a neighborhood $U$ of $p$ where there exists an orthonormal Ricci-eigenvector fields $E_i$, $i=1, \cdots  , n$  such  that

{\rm (i)}  $E_1= \frac{\nabla f}{|\nabla f| }$,

 {\rm (ii)} for $i>1$, $E_i$ is tangent to smooth level hypersurfaces of $f$.

\medskip
These local orthonormal Ricci-eigenvector fields $\{ E_i \}$  shall be called an {\it adapted frame field} of $(M, g, f)$.
We set  $ \zeta_i:= - \Gamma_{ii}^1= - \langle   \nabla_{E_i}  E_i ,  E_1  \rangle=  \langle    E_i , \nabla_{E_i}  E_1  \rangle$, for $i >1$. By (\ref{0002bx}), $\nabla_{E_i}  E_1 = \nabla_{E_i} (\frac{\nabla f}{  | \nabla f |}) =   \frac{ f R({E_i}, \cdot) -  f\frac{R}{n-1} g( {E_i}, \cdot  )  }{  | \nabla f |} $. So we may write:
\begin{equation} \label{lambda06ax}
\nabla_{E_i}  E_1 =   \zeta_i E_i     \ \    {\rm where}   \   \zeta_i =     \frac{f R(E_{i}, E_i)  - \frac{R}{n-1}f  }{| \nabla f|}.
\end{equation}

Due to Lemma \ref{threesolbx}, in a neighborhood of a point $p \in M_{r} \cap \{ \nabla f \neq 0  \}$, $f$  may be considered as a function of the variable $s$ only, and $f^{'} := \frac{df}{ds} = |\nabla f|$.

\begin{lemma} \label{abc60byx} Let $(M, g, f)$ be an $n$-dimensional space  with harmonic curvature, satisfying  {\rm(\ref{0002bx})}.
The Ricci eigenfunctions $\lambda_i$ associated to an adapted frame field $E_i$
are constant on a connected component of a regular level hypersurface $\Sigma_c$ of $f$, and so depend on the local variable  $s$ only. Moreover, $\zeta_i$, $i=2, \cdots, n$, in  $\rm{(\ref{lambda06ax})}$ also depend on $s$ only.
 In particular, we have
$E_i (\lambda_j) = E_i (\zeta_k)= 0$ for $i,k >1$ and any $j$.
\end{lemma}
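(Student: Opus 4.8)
The plan is to establish the single statement $E_a(\lambda_j) = 0$ for every index $a \geq 2$ and every $j$; since $E_2, \dots, E_n$ span the tangent space of a regular level hypersurface $\Sigma_c$ of $f$, this yields the constancy of each $\lambda_j$ on a connected component of $\Sigma_c$, hence its dependence on $s$ alone. The statement about $\zeta_i$ is then automatic: harmonic curvature forces $R$ to be constant (trace the Codazzi identity $\nabla_k r_{ij} = \nabla_i r_{kj}$ with $g^{ij}$ and compare with the contracted second Bianchi identity $\mathrm{div}\, r = \frac{1}{2} dR$), so by (\ref{lambda06ax}) each $\zeta_i = f(\lambda_i - \frac{R}{n-1})/|\nabla f|$ is a function of $f$, $|\nabla f|$ and $\lambda_i$; since $f$ and $|\nabla f|$ already depend on $s$ only and $\lambda_i$ will be shown to, the same holds for $\zeta_i$, and the displayed vanishing identities all follow.

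The case $\lambda_j = \lambda_1$ is immediate from Lemma \ref{threesolbx}(iv), which says that $R(E_1, E_1)$ is constant on a connected component of $\Sigma_c$. For the other eigenfunctions I would first record, from Lemma \ref{threesolx} with $\nabla f = |\nabla f|\, E_1$, the explicit curvature components carrying $\nabla f$ in one slot: $R(E_1, E_i, E_j, E_1) = \delta_{ij}(\frac{R}{n-1} - \lambda_i)$ for $i, j \geq 2$, and $R(E_i, E_j, E_k, E_1) = 0$ whenever $i, j \geq 2$; in particular $R(E_j, E_1, E_j, E_1) = \lambda_j - \frac{R}{n-1}$. Then, fixing indices $a \neq j$ with $a \geq 2$, I would derive two relations for $\Gamma_{jj}^a = \langle \nabla_{E_j} E_j, E_a \rangle$. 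First, from the total symmetry of $\nabla r$ (the Codazzi equation, i.e.\ Lemma \ref{abc60x}(iii)): $E_a(\lambda_j) = (\lambda_j - \lambda_a)\, \Gamma_{jj}^a$. Second, from the second Bianchi identity, by expanding $(\nabla_{E_a} R)(E_j, E_1, E_j, E_1) + (\nabla_{E_j} R)(E_1, E_a, E_j, E_1) + (\nabla_{E_1} R)(E_a, E_j, E_j, E_1) = 0$ and simplifying with (\ref{lambda06ax}), $\nabla_{E_1} E_1 = 0$ and the component formulas above: $E_a(\lambda_j) = (\frac{R}{n-1} - \lambda_a)\, \Gamma_{jj}^a$. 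Subtracting these gives $\Gamma_{jj}^a\, (\lambda_j - \frac{R}{n-1}) = 0$. If $\lambda_j \neq \frac{R}{n-1}$ (equivalently $\zeta_j \neq 0$) this forces $\Gamma_{jj}^a = 0$, whence $E_a(\lambda_j) = 0$ by the first relation; if $\lambda_j = \frac{R}{n-1}$, then $\lambda_j$ is already constant and $E_a(\lambda_j) = 0$ trivially. The only remaining case, $a = j$, then follows by differentiating $\sum_k \lambda_k = R$ along $E_a$: all summands with $k \neq a$ have already been shown to have vanishing $E_a$-derivative, hence so does $\lambda_a$.

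I expect the second relation above to be the main obstacle. The obvious Codazzi-type manipulations (for instance, expanding $R(E_a, E_j) E_1$ by means of (\ref{lambda06ax}) and pairing it with $E_a$ or $E_j$) only reproduce the first relation, so one genuinely needs the second Bianchi identity applied to a curvature component, here $R(E_j, E_1, E_j, E_1) = \lambda_j - \frac{R}{n-1}$, whose value and whose neighbouring components are all pinned down by Lemma \ref{threesolx}. The rest is bookkeeping: in the Bianchi expansion every term other than the two that survive vanishes because of $\Gamma_{ij}^j = 0$, $\nabla_{E_1} E_1 = 0$, $\langle \nabla_{E_1} E_a, E_1 \rangle = 0$, or the component formulas above. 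Once $\lambda_i$, and hence $\zeta_i$, are functions of $s$ alone, the identities $E_i(\lambda_j) = E_i(\zeta_k) = 0$ for $i, k \geq 2$ are clear.
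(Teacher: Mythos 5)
Your reduction of the lemma to the single claim $E_a(\lambda_j)=0$ for $a\ge 2$, and your handling of $\lambda_1$, of the constancy of $R$, and of the passage from $\lambda_i$ to $\zeta_i$, are all fine; the first relation $E_a(\lambda_j)=(\lambda_j-\lambda_a)\Gamma_{jj}^a$ is also correct (it follows from Lemma \ref{abc60x}(i) with $(i,j,k)=(a,j,j)$ even when $\lambda_a=\lambda_j$). The problem is your second relation. If you actually expand
$(\nabla_{E_a}R)(E_j,E_1,E_j,E_1)+(\nabla_{E_j}R)(E_1,E_a,E_j,E_1)+(\nabla_{E_1}R)(E_a,E_j,E_j,E_1)=0$
using $\nabla_{E_i}E_1=\zeta_iE_i$, $\nabla_{E_1}E_1=0$ and the component formulas from Lemma \ref{threesolx}, the first term gives $E_a(\lambda_j)$, the third term gives $0$, but the second term gives $-R(E_1,\nabla_{E_j}E_a,E_j,E_1)-R(E_1,E_a,\nabla_{E_j}E_j,E_1)=\Gamma_{jj}^a(\tfrac{R}{n-1}-\lambda_j)-\Gamma_{jj}^a(\tfrac{R}{n-1}-\lambda_a)=(\lambda_a-\lambda_j)\Gamma_{jj}^a$. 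So this Bianchi identity reproduces exactly the first relation and yields no new information — which is to be expected, since all the curvature components entering it are the ones with a slot equal to $E_1$, and for those the harmonic-curvature (Codazzi) condition already encodes the contracted Bianchi identity.

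Moreover, the relation you want, $E_a(\lambda_j)=(\tfrac{R}{n-1}-\lambda_a)\Gamma_{jj}^a$, cannot be true: combined with the first relation it would force $\Gamma_{jj}^a=0$ whenever $\zeta_j\neq 0$, and this fails already for the Schwarzschild-type warped products $ds^2+h(s)^2 g_{S^{n-1}}$ of case (ii) of Theorem \ref{locals}, where all $\zeta_j=h'/h\neq 0$ while $\Gamma_{jj}^a=\tfrac1h\langle\bar\nabla_{\bar E_j}\bar E_j,\bar E_a\rangle$ is generically nonzero (it is just the spherical connection). There $E_a(\lambda_j)=0$ holds because $\lambda_a=\lambda_j$, not because $\Gamma_{jj}^a$ vanishes. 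So the crux of your argument — obtaining a second, independent linear relation between $E_a(\lambda_j)$ and $\Gamma_{jj}^a$ — is missing, and the case $\lambda_a\neq\lambda_j$ with $a,j\ge 2$ remains unproved. Note that the paper does not prove this lemma either; it defers to Lemma 3.1 of \cite{KS} and Lemma 3 of \cite{Sh}, whose arguments use more of the structure than the Codazzi identity and the components $R(\cdot,\cdot,\cdot,\nabla f)$ alone.
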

\begin{proof}
Lemma 3.1 of \cite{KS} gives the proof in dimensional four. Similar argument can be given in higher dimension. One can refer to Lemma 3 in \cite{Sh}.
\end{proof}

 \section{Vacuum static space with harmonic curvature}
For an adapted frame field $E_i$,  from (\ref{0002bx}), Lemma \ref{threesolbx} (vi)  and  (\ref{lambda06ax}) we have
 \begin{eqnarray}
f^{''}=  f ( \lambda_1   - \frac{R}{n-1}), \ \ \ \ \ \ \ \ \  \ \ \ \ \ \ \   \label{F32a2}  \\
\zeta_i f^{'}=  f ( \lambda_i   - \frac{R}{n-1}), \ \ {\rm for} \ i >1.  \ \ \ \label{F32b2}
\end{eqnarray}

By direct curvature computation we get
\begin{eqnarray} \label{x23}
R_{1ii1} := R(E_1, E_i, E_i, E_1)=-\zeta_i^{'}  -  \zeta_i^2.
\end{eqnarray}
Lemma \ref{threesolx} gives
\begin{eqnarray} \label{x2}
R_{1ii1} = - \lambda_{i} +  \frac{R}{n-1}, \ \ \ \   i >1.
\end{eqnarray}
From (\ref{F32b2})$\sim$(\ref{x2})
we have $\zeta_i f^{'}=  f ( \zeta_i^{'}  +  \zeta_i^2)$.
Setting $\zeta_i  = \frac{u_i^{'}}{u_i}$ with $u_i \neq 0$, we get    $  f u_i^{''}-f^{'}u_i^{'} =0$. Integration gives $u_i^{'} = C_i f$ for a constant $C_i$.   When  $\zeta_i \neq 0$, we may write  $\zeta_i  = \frac{f }{ \int_{s_0}^s f(u) du   + c_i}$ for constants $c_i$ and $s_0$. The function $s$ is defined in Lemma \ref{threesolbx} modulo a constant, so we set $s_0 =0$. Setting $h =   \int_{0}^s f(u) du$,
we have

\begin{eqnarray}
h^{'} =f, \label{x49} \hspace{3.4cm} \\
\zeta_i(s)  = \frac{h^{'}}{h+ c_i}  \ \ \ {\rm if} \   \zeta_i \neq 0. \label{x4} \hspace{1cm} \end{eqnarray}
From (\ref{F32b2}), (\ref{x23}) and (\ref{x2}),
\begin{eqnarray}
\lambda_i  =  \frac{R}{n-1} \ \ \ {\rm if} \   \zeta_i = 0,  \hspace{1.4cm} \label{x5} \\
\lambda_i  = \frac{h^{''}}{h+ c_i} +  \frac{R}{n-1} \ \ \ {\rm if} \   \zeta_i \neq 0  . \ \ \nonumber
\end{eqnarray}

\bigskip

Suppose that there exists some nonzero $\zeta_i$.
We can rearrange $E_i$ so that $\zeta_i \neq 0$ for $i=2, \cdots, m$ and $\zeta_i = 0$ for $i> m$. From
(\ref{F32a2}),  (\ref{x49}),  (\ref{x5}) and
$\lambda_1 =R-  \sum_{i=2}^m \lambda_i -  \sum_{i=m+1}^n \lambda_i $, we get $\lambda_1  =  -  \sum_{i=2}^m \frac{h^{''}}{h+ c_i}   =  \frac{h^{'''} }{ h^{'}} +  \frac{R}{n-1} .$ So,

\begin{eqnarray} \label{x6}
 \frac{h^{'''} }{ h^{'}} +  \frac{R}{n-1}+ \sum_{i=2}^m \frac{h^{''}}{h+ c_i}=0 .
\end{eqnarray}
Set $H:= \Pi_{i=2}^m (h + c_i)$.
Multiplying (\ref{x6}) by  $ h^{'} H$,  we get
 $   H \{ h^{'''} + \sum_{i=2}^m \frac{ h^{'} h^{''} }{h  +c_i }+  \frac{R}{n-1}h^{'} \} =0 $.
Then $   \{ H h^{''} + \frac{R}{n-1} Q(h) \}^{'}  =0 $ where $Q(h)$ is a polynomial function of $h$ such that
$\frac{dQ}{dh} = H$.
Integration gives
 $    H h^{''} + \frac{R}{n-1} Q(h)   =a $ for a constant $a$. We write

\begin{equation}     \label{ff01}
h^{''} + \frac{RQ}{(n-1)H}  =\frac{a}{H}.
\end{equation}

Multiply by $2h^{'}$ and integrate to get

\begin{equation}\label{ff02}
(h^{'}(s))^2- (h^{'}(0))^2  + \int_0^s 2 \frac{RQ}{(n-1)H} h^{'}(u) du = \int_0^s 2\frac{a}{H}h^{'}(u)du.
\end{equation}

  We define an equivalence relation $\sim$  on the set  $\{ 2, 3, \cdots, n \} $ as below;
 \begin{eqnarray} \label{x3j0dk}
i \sim j \ \  {\rm  iff} \ \  \zeta_i = \zeta_j.
\end{eqnarray}
and let $[i]$ denote the equivalence class of $i$.

 \section{Refined adapted Ricci-eigen vector fields}

 In \cite{Ki4}, {\it refined adapted Ricci-eigen vector fields} are defined.
For completeness sake we present them in this section.

Suppose that there is  an adapted frame field $E_1,  \cdots,  E_n$ on  an open set $V  \subset M_{r} \cap \{ \nabla f \neq 0  \}$. We consider the local one-parameter group action $\theta_t(p):= \theta(p, t )$  associated to $E_1= \frac{\nabla f}{|\nabla f| }$, i.e.  $\frac{d ( \theta(p, t))}{dt} =  E_1 (\theta(p, t))$.
We shall use the Lie derivative  $L_{E_1 } E_i (q) := (L_{E_1 } E_i) (q) =   \lim_{h \rightarrow 0} \frac{1}{h} \{ {E_i}(q)- {\theta_{h}}_* {E_i}({\theta_{-h} (q)}) \} $, where ${\theta_{t}}_*$ is the derivative of
 the map $\theta_t : p \mapsto \theta_t( p )$.

For a number $c$ such that $V \cap  f^{-1}(c) \neq \emptyset$,    set  $ W :=V \cap  f^{-1}(c) $.  We denote the   $\lambda_i$-eigenspace in $T_{q}M$ by $\Lambda_{i, q}$.

\begin{lemma} \label{L61}
For $i \geq 2$ and $p \in W$, suppose that  $\{ \theta_h(p) \ |  \ h \in [-2\epsilon, 2\epsilon] \} \subset V $ for a number $\epsilon>0$.
Then ${\theta_{t}}_* ({E_i}(p))$  belongs to  $\Lambda_{i,\theta_t(p)}$ for $ -\epsilon \leq t    \leq \epsilon$.
\end{lemma}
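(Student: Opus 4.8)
The plan is to show that the flow $\theta_t$ of $E_1 = \nabla f/|\nabla f|$ preserves the Ricci-eigenspace distributions, by computing the Lie derivative $L_{E_1}E_i$ and showing it has no component transverse to $\Lambda_{i}$. First I would recall that along the integral curves of $E_1$ the parameter $t$ agrees (up to a constant) with the arclength function $s$ of Lemma \ref{threesolbx}(iii), since $E_1 = \nabla s$ and $|E_1|=1$; thus $\theta_t$ carries the level set $W = f^{-1}(c)$ to a nearby level set $f^{-1}(c')$, and $\theta_{t*}$ maps $T_pW$ into $T_{\theta_t(p)}f^{-1}(c')$. The key computation is $L_{E_1}E_i = \nabla_{E_1}E_i - \nabla_{E_i}E_1$. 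By (\ref{lambda06ax}) we have $\nabla_{E_i}E_1 = \zeta_i E_i \in \Lambda_i$, so the transverse behaviour of $E_i$ under the flow is governed entirely by $\nabla_{E_1}E_i$. Writing $\nabla_{E_1}E_i = \sum_k \Gamma_{1i}^k E_k$, I must show that $\Gamma_{1i}^k = 0$ whenever $\lambda_k \neq \lambda_i$ (and $k \neq 1$, which is immediate since $\Gamma_{1i}^1 = -\langle E_i, \nabla_{E_1}E_1\rangle = 0$ by Lemma \ref{threesolbx}(vi)).

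The vanishing of $\Gamma_{1i}^k$ for $\lambda_k\neq\lambda_i$ is where the Codazzi structure enters. I would apply Lemma \ref{abc60x}(ii) with the triple $(1,i,k)$: when $k\neq 1$ and $k\neq i$, it gives $(\lambda_i - \lambda_k)\Gamma_{1i}^k = (\lambda_1 - \lambda_k)\Gamma_{i1}^k$. But $\Gamma_{i1}^k = \langle \nabla_{E_i}E_1, E_k\rangle = \zeta_i\langle E_i, E_k\rangle = 0$ for $k\neq i$ by (\ref{lambda06ax}). Hence $(\lambda_i-\lambda_k)\Gamma_{1i}^k = 0$, so $\Gamma_{1i}^k = 0$ whenever $\lambda_k\neq\lambda_i$. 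Therefore $\nabla_{E_1}E_i \in \Lambda_i$ pointwise, and combined with $\nabla_{E_i}E_1 \in \Lambda_i$ we get $L_{E_1}E_i \in \Lambda_i$ along the integral curve through $p$, for all $t$ with $\theta_t(p)\in V$.

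Finally I would convert this infinitesimal statement into the claimed finiteness statement. Fix $p\in W$ and consider the curve $t\mapsto v(t) := \theta_{t*}(E_i(p)) \in T_{\theta_t(p)}M$; by definition of the Lie derivative, $v$ satisfies the linear ODE $\frac{D}{dt}v = \nabla_{E_1}v$ corrected by the Lie bracket term, more precisely $\frac{d}{dt}(\theta_{-t*}v(t)) = -\theta_{-t*}(L_{E_1}(\cdot))$, so it suffices to note that the family of subspaces $\Lambda_{i,\theta_t(p)}$ is invariant under the flow-pushforward precisely when $L_{E_1}E_i$ stays in $\Lambda_i$. Concretely, decompose $v(t) = v^{\parallel}(t) + v^{\perp}(t)$ into its $\Lambda_i$-component and its complement (using parallel transport along the integral curve to compare fibres); the component $v^{\perp}$ satisfies a homogeneous linear ODE with zero initial condition at $t=0$ — because $E_i(p)\in\Lambda_{i,p}$ and because the "source term" $L_{E_1}E_i$ lies in $\Lambda_i$ — hence $v^{\perp}\equiv 0$ on $[-\epsilon,\epsilon]$. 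This gives $\theta_{t*}(E_i(p)) \in \Lambda_{i,\theta_t(p)}$ as desired. The main obstacle is the bookkeeping in this last step: one must set up the comparison of the moving eigenspaces carefully (the eigenvalues $\lambda_j$ may themselves vary along the curve, and several may coincide), so that the "transverse component satisfies a homogeneous linear ODE" argument is rigorous; the purely algebraic input, however, is the short computation with Lemma \ref{abc60x}(ii) above.
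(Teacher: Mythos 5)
Your proposal is correct and follows essentially the same route as the paper: the algebraic core is the identical application of Lemma \ref{abc60x}(ii) together with $\nabla_{E_i}E_1=\zeta_i E_i$ and $\nabla_{E_1}E_1=0$ to get $\Gamma_{1i}^k=0$ for $k\notin[i]$, so that $L_{E_1}E_i$ lies in the $\lambda_i$-eigendistribution, and the finite-time statement then follows from a homogeneous linear first-order ODE system for the transverse components of $\theta_{t*}E_i$ with zero initial data. The paper carries out the bookkeeping you flag by fixing the frame $E_j(q)$ at the target point and writing the closed system for $\beta_{ij}(t)=g(\theta_{t*}E_i(\theta_{-t}(q)),E_j(q))$, $j\notin[i]$, which is exactly your $v^{\perp}\equiv 0$ argument made explicit.
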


\begin{proof}
 For  $i \geq 2$, $j \geq 1$  define $\beta_{ij}(t)  =  g   ( {\theta_{t}}_* {E_i}({\theta_{-t} (q)})  , {E_j}(q)  )$. We compute
\begin{eqnarray*}\frac{d}{dt} \{ {\theta_{t}}_* {E_i}({\theta_{-t} (q)}) \} =  \lim_{h \rightarrow 0} \frac{1}{h} \{{\theta_{t}}_*  {\theta_{h}}_* {E_i}({\theta_{-t-h} (q)}) -   {\theta_{t}}_* {E_i}({\theta_{-t} (q)})\}   \hspace{3.2cm}  \\
=  {\theta_{t}}_* [ \lim_{h \rightarrow 0} \frac{1}{h} \{  {\theta_{h}}_* {E_i}({\theta_{-t-h} (q)}) -   {E_i}({\theta_{-t} (q)})\}]
=  - {\theta_{t}}_* [ L_{E_1 } E_i (\theta_{-t} (q))]. \end{eqnarray*}


By Lemma \ref{abc60x} (ii), Lemma \ref{threesolbx} (vi) and   (\ref{lambda06ax}),  $ \Gamma_{1i}^l=0$ if $l \notin [i]$. So,  we have $L_{E_1 } E_i = \nabla_{E_1 } E_i  - \nabla_{E_i } E_1 = \sum_{l \in [i]. l \neq i}   \Gamma_{1i}^l E_l -  \zeta_i E_i   $. Then,
\begin{eqnarray} \label{bet3}
\\
\beta_{ij}^{'} (t)  =  -g   (  {\theta_{t}}_* [ L_{E_1 } E_i (\theta_{-t} (q))]  , {E_j}(q)  )\hspace{5.8cm}\nonumber \\
 \ \ \ \ \ \ \  = - g   (  {\theta_{t}}_* [  \sum_{l \in [i]. l \neq i}   \Gamma_{1i}^l( \theta_{-t} (q)) {E_l }({\theta_{-t} (q)}) -  \zeta_i( \theta_{-t} (q)) {E_i}({\theta_{-t} (q)})]  , {E_j}(q)  ) \  \nonumber  \\
 \ \ \ \  \ \  \ \ \ \ = -\sum_{l \in [i]. l \neq i}  \Gamma_{1i}^l(\theta_{-t} (q)) \beta_{lj} (t)+ \zeta_i (\theta_{-t} (q))  \beta_{ij} (t). \hspace{4.1cm} \nonumber
\end{eqnarray}

We fix $ j \notin [i]$.   We view (\ref{bet3})
as  a system of  first order ODE (ordinary differential equations)  for $r$ functions $\beta_{i_1 j} (t)$, $\beta_{i_2j} (t)$,  $ \cdots,   \beta_{i_r j} (t)$ where  $E_{i_1}, \cdots ,  E_{i_r}$ is the basis of the $\lambda_i$-eigenspace.
As $\beta_{i_1j}(0)= \cdots = \beta_{i_r j}(0)  = 0 $,  by the uniqueness of the solution of the ODE,   $\beta_{ij} (t) =  g  ( {\theta_{t}}_* {E_i}({\theta_{-t} (q)}) , {E_j}(q)  )=0$.
If $q = \theta_{t} (p) $, then
 ${\theta_{t}}_*( {E_i}(p) )  $ is orthogonal to  ${E_j}(q) $.
 These $E_j(q)$,  $j \notin [i]$, span the subspace orthogonal to $\Lambda_{i, q}$.
So, ${\theta_{t}}_*( {E_i}(p)) \in \Lambda_{i, \theta_{t}(p)}$ for $p \in W$.
\end{proof}

 The function $s$ in Lemma \ref{threesolbx} is defined modulo a constant and we may set $s(p)  = 0 $ for $p \in W$.
The function $\zeta_i$ depends only on $s$ by Lemma \ref{abc60byx}.    As  $\frac{d}{dt}  \theta_t (p) = E_1 ( \theta_t (p) )= \nabla s ( \theta_t (p) ) $, we have   $s( \theta_t  (p)) = t$.
We write
$ \zeta_i(\theta_{t} (p))=\zeta_i(s(\theta_{t} (p))) =\zeta_i( t) $.

\begin{lemma} \label{L42}
For $i \geq 2$ and $p \in W$, suppose  $\{ \theta_h(p) \ |  \ h \in [-2\epsilon, 2\epsilon] \} \subset V $ for a number $\epsilon>0$.  Let $F_i$ be the vector field defined  by $F_i (\theta_{t} (p)) =e^{- \int_{0}^t   \zeta_i(v) dv }  {\theta_{t}}_* ({E_i}(p))$.
If  $E_{i_1}(p), \cdots ,  E_{i_r}(p)$ is the basis of $\Lambda_{i, p}$,
 then

$F_{i_1}(\theta_{t} (p)), \cdots ,  F_{i_r}(\theta_{t} (p))$
 form an orthonormal basis  for $\Lambda_{i, \theta_{t}(p)}$.

\end{lemma}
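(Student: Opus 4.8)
The plan is to verify two things for the vector fields $F_{i_1}, \dots, F_{i_r}$ along the integral curve $t \mapsto \theta_t(p)$: first that each $F_{i_j}(\theta_t(p))$ lies in the eigenspace $\Lambda_{i, \theta_t(p)}$, and second that the collection is orthonormal there. The membership in $\Lambda_{i,\theta_t(p)}$ is immediate from Lemma \ref{L61}: since $F_{i_j}(\theta_t(p))$ is just a positive scalar multiple $e^{-\int_0^t \zeta_i(v)\,dv}$ of ${\theta_t}_*(E_{i_j}(p))$, and the latter lies in $\Lambda_{i,\theta_t(p)}$ by that lemma, so does $F_{i_j}(\theta_t(p))$. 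So the real content is the orthonormality, i.e. that $g\bigl(F_{i_j}(\theta_t(p)), F_{i_k}(\theta_t(p))\bigr) = \delta_{jk}$ for all $t \in [-\epsilon,\epsilon]$.

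First I would set $\gamma_{jk}(t) := g\bigl({\theta_t}_*(E_{i_j}(p)),\, {\theta_t}_*(E_{i_k}(p))\bigr)$, so that the desired inner product is $e^{-2\int_0^t \zeta_i(v)\,dv}\gamma_{jk}(t)$, and it suffices to show $\gamma_{jk}(t) = e^{2\int_0^t \zeta_i(v)\,dv}\delta_{jk}$. At $t=0$ this reads $\gamma_{jk}(0) = \delta_{jk}$, which holds because $E_{i_1}(p),\dots,E_{i_r}(p)$ is an orthonormal basis of $\Lambda_{i,p}$. I would then differentiate $\gamma_{jk}(t)$ in $t$. Using the computation from the proof of Lemma \ref{L61} that $\frac{d}{dt}\{{\theta_t}_* E_i(\theta_{-t}(q))\} = -{\theta_t}_*[L_{E_1}E_i(\theta_{-t}(q))]$ — equivalently, reading it along the curve $q = \theta_t(p)$, the covariant/Lie transport of the pushed-forward frame is governed by $\nabla_{E_1}$ — and the identity $L_{E_1}E_i = \sum_{l \in [i],\, l\neq i}\Gamma_{1i}^l E_l - \zeta_i E_i$ together with $\Gamma_{1i}^l = 0$ for $l \notin [i]$, one finds that $\frac{d}{dt}({\theta_t}_* E_{i_j}(p))$ is, modulo the $\theta_t$-pushforward, a linear combination of the $E_l(\theta_t(p))$ with $l \in [i]$, with the coefficient of $E_{i_j}$ itself being $\zeta_i$. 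Since $g$ is parallel along the flow line in the appropriate sense (more precisely, $E_1 = \nabla s$ and one uses metric compatibility), this yields a closed linear first-order ODE system for the matrix $(\gamma_{jk}(t))$ of the shape $\gamma'(t) = A(t)\gamma(t) + \gamma(t)A(t)^{\mathsf T}$ where $A(t)$ is the $r\times r$ matrix of connection coefficients $\Gamma_{1,i_j}^{i_k}(\theta_t(p))$ restricted to the $\lambda_i$-eigenspace, whose diagonal entries are $\zeta_i(t)$.

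The key observation that finishes the argument is that $A(t)$ is \emph{antisymmetric off the diagonal plus $\zeta_i(t)$ times the identity}: indeed $\Gamma_{1i_j}^{i_k} + \Gamma_{1i_k}^{i_j} = E_1\, g(E_{i_j}, E_{i_k}) = 0$ for $j \neq k$ by orthonormality of the frame and metric compatibility, while $\Gamma_{1i_j}^{i_j} = g(\nabla_{E_1}E_{i_j}, E_{i_j}) = 0$ as well — wait, this needs care: the diagonal term $\zeta_i$ enters through the $-\nabla_{E_i}E_1$ part of the Lie bracket, not $\nabla_{E_1}E_i$. So writing $A(t) = \zeta_i(t) I + B(t)$ with $B(t)$ antisymmetric, the ODE becomes $\gamma'(t) = 2\zeta_i(t)\gamma(t) + B(t)\gamma(t) + \gamma(t)B(t)^{\mathsf T} = 2\zeta_i(t)\gamma(t) + B(t)\gamma(t) - \gamma(t)B(t)$. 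One checks directly that $\gamma(t) = e^{2\int_0^t\zeta_i(v)\,dv} I$ solves this ODE with the correct initial condition $\gamma(0) = I$ (the commutator terms vanish on a scalar matrix), and by uniqueness of solutions of linear ODEs this is \emph{the} solution. Hence $g(F_{i_j}(\theta_t(p)), F_{i_k}(\theta_t(p))) = e^{-2\int_0^t\zeta_i}\gamma_{jk}(t) = \delta_{jk}$, as desired. The main obstacle is bookkeeping: correctly identifying that the diagonal part of the transport operator is exactly $\zeta_i(t)$ (so that the normalizing factor $e^{-\int_0^t\zeta_i}$ is the right one) and that the off-diagonal part is genuinely antisymmetric within the eigenspace — both of which rest on Lemma \ref{abc60x}(ii), Lemma \ref{threesolbx}(vi), and equation (\ref{lambda06ax}) guaranteeing $\Gamma_{1i}^l = 0$ for $l \notin [i]$, so that the eigenspace is flow-invariant and the ODE system genuinely closes up.
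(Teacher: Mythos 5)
Your proposal is correct and follows essentially the same route as the paper: form the Gram matrix $\gamma_{jk}(t)$ of the pushed-forward frame along the flow of $E_1$, derive the closed linear first-order ODE it satisfies (using $\Gamma_{1i}^l=0$ for $l\notin[i]$ so the system stays within the eigenspace), exhibit $e^{2\int_0^t\zeta_i}\,\delta_{jk}$ as a solution, and conclude by ODE uniqueness. The only difference is presentational: you make explicit the antisymmetry $\Gamma_{1i_j}^{i_k}+\Gamma_{1i_k}^{i_j}=0$ that forces the commutator terms to vanish on a scalar matrix, a point the paper compresses into ``one can check.''
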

\begin{proof} For $i, j \in \{i_1, \cdots, i_r \}$,
we define  $\gamma_{ij}(t) =  g  ( {\theta_{t}}_* {E_i}(\theta_{-t} (q))  , {\theta_{t}}_* {E_j}(\theta_{-t} (q)) )$.
Then $\gamma_{ij}(0)  =\delta_{ij}$. As $\zeta_i = \zeta_j$ and $[i]=[j]$, computing as in the proof of Lemma \ref{L61} yields
\begin{eqnarray} \label{f78}
\label{r5}   \\
\gamma_{ij}^{'}(t)
=  2 \zeta_i(\theta_{-t} (q))\gamma_{ij}(t)- \sum_{l \in [i], l \neq i}   \Gamma_{1i}^l(\theta_{-t} (q)) \gamma_{lj}(t)
-\sum_{l \in [j], l \neq j}   \Gamma_{1j}^l(\theta_{-t} (q)) \gamma_{il}(t). \nonumber
\end{eqnarray}
 We view (\ref{f78}) as a system of  first order ODE for  the $r^2 $ functions $\gamma_{ij} (t)$, $i, j \in \{ i_1, \cdots i_r \}$ in the variable $t$.
 One can check that $\tilde{\gamma}_{ij}(t) := \delta_{ij}e^{ \int_{0}^t   2 \zeta_i(\theta_{-u} (q)) du }  $ is a solution  of (\ref{r5}).
As  $\gamma_{ij}(0)  = \tilde{\gamma}_{ij}(0)=\delta_{ij}$,  by  the uniqueness of the solution of the ODE, $\gamma_{ij}(t)=  g   ( {\theta_{t}}_* {E_i}(\theta_{-t} (q))  , {\theta_{t}}_* {E_j}(\theta_{-t} (q))  ) =   \delta_{ij} e^{ \int_{0}^t   2 \zeta_i(\theta_{-u} (q)) du }  $.
So,  $ e^{ \int_{0}^t  - \zeta_i(\theta_{-u} (q)) du } {\theta_{t}}_* {E_i}(\theta_{-t} (q))$, $i =i_1, \cdots, i_r, $ is an orthonormal basis   for  $\Lambda_{i, q}$. Set   $q= \theta_{t}(p)$ for $p  \in W$ and $v= t-u$.
Then
 $F_i (\theta_{t} (p)) :=e^{- \int_{0}^t   \zeta_i(\theta_v(p)) dv }  {\theta_{t}}_*( {E_i}(p))$ is  an orthonormal basis of $\Lambda_{i, \theta_{t}(p)}$.
\end{proof}

By Lemma \ref{L42}, we have a {\it refined} adapted frame field $F_1:=E_1, F_2, \cdots,   F_n$. For any point $p_0$ in $M_{r} \cap \{ \nabla f \neq 0  \}$, there exists a neighborhood of $p_0$ equipped with a  refined adapted frame field.
We compute, for $i >1$,
\begin{eqnarray*}
L_{F_1 }   F_i (\theta_t(p)) =   \lim_{h \rightarrow 0} \frac{1}{h} \{  {\theta_{-h}}_* {F_i}( \theta_{h+t} (p)) -   {F_i}(\theta_t(p))  \} \hspace{3.3cm} \\
=   \lim_{h \rightarrow 0} \frac{1}{h} \{  {\theta_{-h}}_*  (e^{ \int_{0}^{t+h}  - \zeta_i(\theta_v(p)) dv }  {\theta_{t+h}}_* {E_i}_{p})
-   e^{ \int_{0}^t  - \zeta_i(\theta_v(p)) dv }  {\theta_t}_* {E_i}_{p}\} \\
=   \lim_{h \rightarrow 0} \frac{1}{h} \{  e^{ \int_{0}^{t+h}-   \zeta_i(\theta_v(p)) dv }  {\theta_{t}}_* {E_i}_{p}
-   e^{ \int_{0}^t -  \zeta_i(\theta_v(p)) dv }  {\theta_t}_* {E_i}_{p}\}   \hspace{1.5cm} \\
=- \zeta_i(\theta_{t} (p)) F_i (\theta_{t} (p)). \hspace{7cm}
\end{eqnarray*}
Simply writing, we have
$L_{F_1 }   F_i=- \zeta_i F_i .$
As adapted Ricci-eigen vector fields, $F_i$ satisfy (\ref{lambda06ax}), so
$L_{F_1 }   F_i = \nabla_{F_1 }   F_i  - \nabla_{F_i }   F_1 =  \nabla_{F_1 }   F_i  -\zeta_i F_i $.
So, we get
\begin{equation} \label{e07u}
\nabla_{F_1 }   F_i=0 \ \ {\rm for}  \ \  i >1.
\end{equation}

\section{Computation of $\Gamma_{ij}^k$ in a refined adapted frame field}

For a point $p_0 \in M_{r} \cap \{ \nabla f \neq 0  \}$, we consider a small coordinate neighborhood $V$ of Lemma \ref{threesolbx} (v).  For $p \in V \cap f^{-1} ( f(p_0))  $ and $i>1$,  $ \frac{d}{ dt } \{ x_i (  \theta_t (p) ) \} = dx_i  (  \frac{d}{ dt }\theta_t (p)  ) =dx_i  (  E_1  )   =0  $ from Lemma \ref{threesolbx}.
So,  $x_i (  \theta_t (p) ) = x_i (   p )$,  as long as $ \theta_t (p) \in V$.
Then ${\theta_t}_* ( \frac{\partial}{\partial x_i}  )  =  \frac{\partial}{\partial x_i} $.
We can still set $s( p) = 0$ so that  $s( \theta_t  (p)) = t$.
We may write $E_i = \sum_{l=2}^n a_{il} (s, x)  \frac{\partial}{\partial x_l}$ where $s=x_1$ and $x= (x_2, \cdots, x_n)$. Then $E_i(p) = \sum_{l=2}^n a_{il} (0, x(p))  \frac{\partial}{\partial x_l}$.
We get
$  {\theta_t}_* {E_i}(p) =  \sum_{l=2}^n a_{il} (0, x(p))  \frac{\partial}{\partial x_l}$.
Set $e_i (s,x) := \sum_{l=2}^n a_{il} (0, x)  \frac{\partial}{\partial x_l}$.
We can write
\begin{equation} \label{e07}
F_i (s, x) =  e^{ \int_{0}^s  - \zeta_i(v) dv }   e_i (s, x),    \ \  {\rm for } \ i>1.
\end{equation}

We have
$[F_i,  F_j](s,x) = e^{ \int_0^s -(\zeta_i +\zeta_j ) (v) dv  } [e_i, e_j]      $.
Here $ [e_i, e_j]$ is tangent to  the level surfaces of $f$. We write
$[e_i, e_j] (s, x) = \sum_{l=2}^n \tilde{\gamma}_{ij}^l ( x)   e_l  \\ = \sum_{l=2}^n \tilde{\gamma}_{ij}^l ( x) e^{ \int_{0}^s   \zeta_l(v) dv }  F_l$ for a function $\tilde{\gamma}_{ij}^l ( x)$ of $x$. For $i, j, k \geq 2$,  we get
\begin{equation} \label{e01}
 g([F_i,  F_j] , F_k    )(s, x)
=e^{ \int_0^s -(\zeta_i +\zeta_j - \zeta_k )  (v) dv  }\tilde{\gamma}_{ij}^k ( x). \end{equation}
From Cheeger-Ebin p.2, we have
\begin{equation}\label{e02}
2g(\nabla_{F_i}F_j , F_k)= g([F_i,  F_j] , F_k    ) - g([F_i,  F_k] , F_j    ) - g([F_j,  F_k] , F_i    ). \end{equation}

If $[i]=[j]=[k]$,   from (\ref{e01}) and (\ref{e02}) we get $\Gamma_{ij}^k (s,x) =e^{\int_0^s -\zeta_i   (v)dv   } {\hat\gamma}_{ij}^k ( x) $, where  $\hat{\gamma}_{ij}^k (x)= \frac{1}{2} ( \tilde{\gamma}_{ij}^k ( x)  -  \tilde{\gamma}_{ik}^j ( x)  -  \tilde{\gamma}_{jk}^i  ( x)  ) $.

 From (\ref{x4}), $e^{\int_0^s -\zeta_i   (v)dv   } = e^{-  \int_0^s (  \frac{h^{'}}{ c_i + h })   (v)dv   } $. So we may write for a constant $a_i$;
\begin{eqnarray} \label{fe1}
e^{\int_0^s -\zeta_i   (v)dv   }  =\frac{ a_i  } {h+ c_i} \ \ {\rm if} \ \zeta_i \neq 0. \end{eqnarray}

When $[i]=[j]=[k] $ and $\zeta_i \neq 0$,  we get
$\Gamma_{ij}^k =\frac{{\gamma}_{ij}^k (x)  }{h+ c_i}  $, where we set ${\gamma}_{ij}^k:=a_i \hat{\gamma}_{ij}^k $.

When $[i]=[j]=[k]$   and $\zeta_i = 0$, we have $\Gamma_{ij}^k =\hat{{\gamma}}_{ij}^k   $. We may set ${\gamma}_{ij}^k  =\hat{{\gamma}}_{ij}^k   $. We have got


\begin{lemma} \label{L10}
If $[i]=[j]=[k]$,  $\Gamma_{ij}^k (s, x) = \frac{ {\gamma}_{ij}^k(x)}{ h(s)+ c_i  }$ when $\zeta_i \neq 0$ and $\Gamma_{ij}^k =   {\gamma}_{ij}^k(x)$ when $\zeta_i = 0$.
\end{lemma}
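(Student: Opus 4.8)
The statement is essentially a bookkeeping summary of the chain of identities derived just above, so the plan is to assemble those formulas carefully, keeping track of exactly where the hypothesis $[i]=[j]=[k]$ is used.

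First I would start from the explicit form (\ref{e07}) of the refined frame on the coordinate neighborhood $V$ of Lemma \ref{threesolbx}(v): $F_i = e^{-\int_0^s\zeta_i(v)\,dv}\,e_i$ with $e_i = \sum_{l\geq 2}a_{il}(0,x)\,\frac{\partial}{\partial x_l}$. The one elementary observation that makes everything work is that each $e_i$ differentiates only in the $x$-directions while each $\zeta_j$ is a function of $s=x_1$ alone (Lemma \ref{abc60byx}); hence $e_i(e^{-\int_0^s\zeta_j})=e_j(e^{-\int_0^s\zeta_i})=0$, the cross terms in the Lie bracket cancel, and $[F_i,F_j]=e^{-\int_0^s(\zeta_i+\zeta_j)(v)\,dv}[e_i,e_j]$. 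Since this bracket is tangent to the level sets of $f$, I would write $[e_i,e_j]=\sum_{l\geq 2}\tilde\gamma_{ij}^l(x)\,e_l=\sum_{l\geq 2}\tilde\gamma_{ij}^l(x)\,e^{\int_0^s\zeta_l(v)\,dv}F_l$, which yields (\ref{e01}).

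Next I would feed (\ref{e01}) into the Koszul identity (\ref{e02}), which is valid in the simplified three-bracket form precisely because $\{F_i\}$ is orthonormal (Lemma \ref{L42}), so the metric-derivative terms $F_i\,g(F_j,F_k)$ etc.\ all vanish. This is where the hypothesis enters: if $[i]=[j]=[k]$, then $\zeta_i=\zeta_j=\zeta_k$, so each of the three combinations $\zeta_i+\zeta_j-\zeta_k$, $\zeta_i+\zeta_k-\zeta_j$, $\zeta_j+\zeta_k-\zeta_i$ collapses to the single value $\zeta_i$; the common factor $e^{-\int_0^s\zeta_i(v)\,dv}$ then comes out of (\ref{e02}) and gives $\Gamma_{ij}^k(s,x)=e^{-\int_0^s\zeta_i(v)\,dv}\,\hat\gamma_{ij}^k(x)$ with $\hat\gamma_{ij}^k:=\tfrac12(\tilde\gamma_{ij}^k-\tilde\gamma_{ik}^j-\tilde\gamma_{jk}^i)$.

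Finally I would evaluate the prefactor according to whether $\zeta_i$ vanishes. If $\zeta_i\neq 0$, then (\ref{x4}) gives $\zeta_i=h'/(h+c_i)$, so $\int_0^s\zeta_i=\log\frac{h(s)+c_i}{h(0)+c_i}$ and $e^{-\int_0^s\zeta_i}=a_i/(h+c_i)$ for a suitable constant $a_i$, which is (\ref{fe1}); setting $\gamma_{ij}^k:=a_i\hat\gamma_{ij}^k$ gives $\Gamma_{ij}^k=\gamma_{ij}^k(x)/(h(s)+c_i)$. If $\zeta_i=0$, the prefactor is $1$ and $\Gamma_{ij}^k=\gamma_{ij}^k(x):=\hat\gamma_{ij}^k(x)$. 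This is exactly the asserted dichotomy. The argument carries no serious difficulty; the only two points needing a moment's attention are the cancellation of the Lie-bracket cross terms and the collapse of the three exponents under $[i]=[j]=[k]$, both immediate once the explicit shape (\ref{e07}) is in hand.
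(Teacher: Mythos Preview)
Your proposal is correct and follows essentially the same route as the paper: compute $[F_i,F_j]$ from (\ref{e07}), feed the bracket formula (\ref{e01}) into the Koszul identity (\ref{e02}), use $[i]=[j]=[k]$ to collapse the three exponents to a single $e^{-\int_0^s\zeta_i}$, and then invoke (\ref{fe1}) to identify this factor with $a_i/(h+c_i)$ when $\zeta_i\neq 0$. You are slightly more explicit than the paper about why the cross terms in the Lie bracket vanish and why the short Koszul formula applies, but the argument is the same.
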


\bigskip
Let $i, j$ be an integer $2 \leq i, j \leq n$ with $[i] \neq [j]$.
We use $\nabla_{F_1} F_i =0$ for $i >1$ to compute
the following Jacobi identity;
\begin{eqnarray*}
0= [[F_i, F_j], F_1] + [[F_j, F_1], F_i] +  [[F_1, F_i], F_j] \hspace{3.1cm}  \\
= [\sum_{k >1}^n (\Gamma_{ij}^k -\Gamma_{ji}^k ) F_k, F_1] + [\zeta_j F_j , F_i] -  [\zeta_i F_i , F_j] \hspace{2.8cm}\\
= \sum_{k >1}^n (\Gamma_{ij}^k -\Gamma_{ji}^k )(\zeta_k    -    \zeta_i - \zeta_j      ) F_k  - \sum_{k >1}^n F_1 (\Gamma_{ij}^k -\Gamma_{ji}^k ) F_k .\hspace{1.9cm}
\end{eqnarray*}
From this we obtain, for $2 \leq k \leq n$,
\begin{equation} \label{eq5} F_1 (\Gamma_{ij}^k -\Gamma_{ji}^k )  =  (\zeta_k   -\zeta_i - \zeta_j )( \Gamma_{ij}^k -\Gamma_{ji}^k)  .
\end{equation}

We shall prove

\begin{lemma}\label{L11}
Let $[i], [j], [k]$ be pairwise different. Then the following holds for some function ${\gamma}_{ij}^k(x)$ of $x$;

{\rm  (i)} $\Gamma_{ij}^k = \frac{  {\gamma}_{ij}^k(x)}{ h+ c_i  }$,   if $\zeta_i \neq 0$,

{\rm  (ii)} $\Gamma_{ij}^k =    {\gamma}_{ij}^k(x)$, if  $\zeta_i =0$.


\end{lemma}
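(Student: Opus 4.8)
\emph{Proof proposal.} My plan is to show that along the integral curves of $F_1$ the function $\Gamma_{ij}^k$ satisfies the linear ordinary differential equation $F_1(\Gamma_{ij}^k)=-\zeta_i\,\Gamma_{ij}^k$, and then to integrate it and read off the shape of the solution from (\ref{fe1}). It will turn out that the pairwise distinctness of $[i],[j],[k]$ plays no role in this argument: it works for every $i,j,k\ge2$, so the same computation simultaneously recovers Lemma \ref{L10}.

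First I would differentiate $\Gamma_{ij}^k=g(\nabla_{F_i}F_j,F_k)$ in the direction $F_1$. Since $\nabla_{F_1}F_m=0$ for all $m>1$ by (\ref{e07u}), metric compatibility gives $F_1(\Gamma_{ij}^k)=g(\nabla_{F_1}\nabla_{F_i}F_j,F_k)$. Using the defining identity $\nabla_{F_1}\nabla_{F_i}F_j=R(F_1,F_i)F_j+\nabla_{F_i}\nabla_{F_1}F_j+\nabla_{[F_1,F_i]}F_j$ together with $\nabla_{F_1}F_j=0$ and $[F_1,F_i]=L_{F_1}F_i=-\zeta_i F_i$ (the identity established just before (\ref{e07u})), one gets $\nabla_{F_1}\nabla_{F_i}F_j=R(F_1,F_i)F_j-\zeta_i\,\nabla_{F_i}F_j$, hence
\[
F_1(\Gamma_{ij}^k)=R(F_1,F_i,F_j,F_k)-\zeta_i\,\Gamma_{ij}^k .
\]

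The key point is that the mixed curvature component vanishes, $R(F_1,F_i,F_j,F_k)=0$ whenever $j,k\ge2$. Indeed, by the pair symmetry and the anti-symmetry of $R$, $R(F_1,F_i,F_j,F_k)=-R(F_j,F_k,F_i,F_1)=-\frac{1}{|\nabla f|}R(F_j,F_k,F_i,\nabla f)$, and Lemma \ref{threesolx} applied with $(X,Y,Z)=(F_j,F_k,F_i)$ writes $R(F_j,F_k,F_i,\nabla f)$ as a combination of terms each carrying a factor $g(\nabla f,F_j)$ or $g(\nabla f,F_k)$, all of which vanish because $\nabla f$ is a multiple of $F_1=E_1$ and $j,k\ge2$. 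Therefore $F_1(\Gamma_{ij}^k)=-\zeta_i\,\Gamma_{ij}^k$. In the coordinates of Lemma \ref{threesolbx}(v) we have $F_1=\partial_s$, and $\zeta_i=\zeta_i(s)$ by Lemma \ref{abc60byx}, so for fixed $x$ this reads $\partial_s\Gamma_{ij}^k=-\zeta_i(s)\,\Gamma_{ij}^k$; integration from $s=0$ gives
\[
\Gamma_{ij}^k(s,x)=e^{-\int_0^s\zeta_i(v)\,dv}\,\Gamma_{ij}^k(0,x).
\]
By (\ref{fe1}) the factor $e^{-\int_0^s\zeta_i(v)\,dv}$ equals $\frac{a_i}{h(s)+c_i}$ when $\zeta_i\ne0$ and equals $1$ when $\zeta_i=0$; setting $\gamma_{ij}^k(x):=a_i\,\Gamma_{ij}^k(0,x)$ in the first case and $\gamma_{ij}^k(x):=\Gamma_{ij}^k(0,x)$ in the second yields (i) and (ii).

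The only genuinely delicate step is the vanishing $R(F_1,F_i,F_j,F_k)=0$, which is really a matter of keeping the index positions and sign conventions of Lemma \ref{threesolx} straight; everything else is a one-line integration. If one prefers to stay with the relations already derived, an alternative route is to combine (\ref{eq5}) with Lemma \ref{abc60x}(ii) to obtain $\Gamma_{ij}^k=\frac{\lambda_i-\lambda_k}{\lambda_i-\lambda_j}\,g([F_i,F_j],F_k)$ on the open dense subset where $h''=f'\not\equiv0$ forces $\lambda_i\ne\lambda_j$, and then to simplify $\frac{\lambda_i-\lambda_k}{\lambda_i-\lambda_j}\,e^{\int_0^s-(\zeta_i+\zeta_j-\zeta_k)(v)\,dv}$ using (\ref{x4}), (\ref{x5}) and (\ref{fe1}) through a short case analysis on which of $\zeta_i,\zeta_j,\zeta_k$ vanish; this route is heavier, which is why I would favour the ordinary differential equation argument above.
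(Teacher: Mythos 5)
Your argument is correct, and it is genuinely different from the paper's. The paper never touches the curvature tensor in this proof: it only has the Jacobi-identity ODE (\ref{eq5}) for the antisymmetrized combination $\Gamma_{ij}^k-\Gamma_{ji}^k$, and must convert that into information about $\Gamma_{ij}^k$ itself by invoking the Codazzi relation of Lemma \ref{abc60x}(ii) in the form $\Gamma_{ji}^k=\frac{\zeta_j-\zeta_k}{\zeta_i-\zeta_k}\Gamma_{ij}^k$; the factor $\frac{\zeta_i-\zeta_j}{\zeta_i-\zeta_k}$ is then expanded via (\ref{x4}) in a case analysis according to which of $\zeta_i,\zeta_j,\zeta_k$ vanish (this is precisely the ``alternative route'' you sketch and rightly call heavier, and it is also why the paper needs $[i],[j],[k]$ pairwise different and a separate remark for the case $\Gamma_{ij}^k=0$). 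You instead obtain the ODE $F_1(\Gamma_{ij}^k)=-\zeta_i\,\Gamma_{ij}^k$ for the unsymmetrized coefficient directly; the key input, the vanishing of $R(F_1,F_i,F_j,F_k)$ for $i,j,k\ge2$, does follow from Lemma \ref{threesolx} exactly as you say, since after pair symmetry and antisymmetry every term on the right-hand side carries a factor $g(\nabla f,F_j)$ or $g(\nabla f,F_k)$ (and your sign conventions are consistent with the paper's, as one checks against the agreement of (\ref{x23}) with (\ref{x2})). What your route buys is uniformity: a single computation yields Lemmas \ref{L10}, \ref{L11} and \ref{L12} at once, with no case analysis, no separate treatment of $\Gamma_{ij}^k=0$, and no hypothesis on the equivalence classes. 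The one genuinely delicate point is the slot bookkeeping in Lemma \ref{threesolx}, which you flag and handle correctly.
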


\begin{proof}
If $[i], [j], [k]$ are pairwise different,
Lemma \ref{abc60x} (ii) gives $ \Gamma_{ji}^k = \frac{\zeta_j -  \zeta_k }{\zeta_i - \zeta_k     } \Gamma_{ij}^k$, and
$ \Gamma_{ij}^k -  \Gamma_{ji}^k  = \frac{\zeta_i -  \zeta_j }{\zeta_i - \zeta_k     } \Gamma_{ij}^k
 $.
So, $ \Gamma_{ij}^k -  \Gamma_{ji}^k =0$ if and only if $ \Gamma_{ij}^k=0$. If  $ \Gamma_{ij}^k=0$, (i) and (ii) hold.
Now assume $ \Gamma_{ij}^k \neq 0$.

To prove (i);  If  $\zeta_i , \zeta_j , \zeta_k$ are all different from  $ 0$, then  from (\ref{x4}),

 \noindent $\frac{\zeta_i -  \zeta_j }{\zeta_i - \zeta_k     } = \frac{c_i -  c_j }{c_i - c_k     }  \frac{c_k + h }{c_j +h  }  $.
So,
$ \Gamma_{ij}^k -  \Gamma_{ji}^k  = \frac{\zeta_i -  \zeta_j }{\zeta_i - \zeta_k     } \Gamma_{ij}^k
  =  \frac{c_i -  c_j }{c_i - c_k     }  \frac{c_k + h }{c_j +h  }  \Gamma_{ij}^k
 $.

$ F_1 (\ln|\Gamma_{ij}^k -\Gamma_{ji}^k| )=   F_1 (\ln| \frac{c_i -  c_j }{c_i - c_k     }  \frac{c_k + h }{c_j +h  }  \Gamma_{ij}^k| ) =   F_1 (\ln|  h   + c_k|-  \ln| h   + c_j| + \ln |\Gamma_{ij}^k| )\\
= \zeta_k -  \zeta_j + F_1 \ln |\Gamma_{ij}^k|
  $.
Meanwhile, from (\ref{eq5}),
$ F_1 (\ln|\Gamma_{ij}^k -\Gamma_{ji}^k| ) = \zeta_k   -\zeta_i - \zeta_j
  $. By comparison we obtain
$F_1 \ln |\Gamma_{ij}^k| = - \zeta_i =   -\frac{h^{'}}{ h  + c_i}  $. Integrating this, we get
$\Gamma_{ij}^k =  \frac{  {\gamma}_{ij}^k(x) }{ h+ c_i}. $

Next, suppose  $\zeta_j =0$. Then
 $\frac{\zeta_i -  \zeta_j }{\zeta_i - \zeta_k     } =  \frac{c_k + h }{c_k - c_i  }  $.
Then, $ F_1 (\ln|\Gamma_{ij}^k -\Gamma_{ji}^k| )=   F_1 (\ln|   \frac{c_k + h }{c_k - c_i }  \Gamma_{ij}^k| ) =   F_1 (\ln|  h   + c_k| + \ln |\Gamma_{ij}^k| )
= \zeta_k + F_1 \ln |\Gamma_{ij}^k|.
  $ Comparing with (\ref{eq5}),
$ F_1 \ln |\Gamma_{ij}^k| = -\zeta_i $.
We get $ \Gamma_{ij}^k
=\frac{\gamma_{ij}^k  }{ h+ c_i}   $. As $ \Gamma_{ij}^k = -\Gamma_{ik}^j  $, we get (i).

\medskip
To prove (ii);  if  $\zeta_i =0$, then
 $\frac{\zeta_i -  \zeta_j }{\zeta_i - \zeta_k     } =  \frac{c_k + h }{c_j +h  }  $. Following the computation in case (i), we  get $\Gamma_{ij}^k =   {\gamma}_{ij}^k(x) . $



\end{proof}

We highlight the following from Lemma  \ref{abc60x} (ii), (iii) and Lemma \ref{abc60byx};
\begin{equation} \label{eq37}
\Gamma_{ij}^k=0  \ \  {\rm for}   \ \  [i]= [j] \neq [k].
\end{equation}

\medskip
Now suppose $[i]\neq[j]= [k]$.
From (\ref{eq5}) and  (\ref{eq37}), we get   $F_1 (\Gamma_{ij}^k  )  =     -\zeta_i ( \Gamma_{ij}^k )$. From this we obtain

\begin{lemma} \label{L12}
Suppose $[i]\neq[j]= [k]$.  Then the following holds for some function ${\gamma}_{ij}^k(x)$ of $x$;


{\rm  (i)} $\Gamma_{ij}^k = \frac{  {\gamma}_{ij}^k(x)}{ h+ c_i  }$,   if $\zeta_i \neq 0$,

{\rm  (ii)} $\Gamma_{ij}^k =    {\gamma}_{ij}^k(x)$, if  $\zeta_i =0$.

\end{lemma}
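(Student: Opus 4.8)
The plan is to convert the first-order relation (\ref{eq5}) into a scalar ordinary differential equation for $\Gamma_{ij}^k$ along the integral curves of $F_1=E_1$, after first observing that the partner quantity $\Gamma_{ji}^k$ in (\ref{eq5}) vanishes identically; once that reduction is in place, the lemma follows by the same one-variable integration already used for Lemmas \ref{L10} and \ref{L11}.

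First I would dispose of the degenerate subcase $j=k$: here $\Gamma_{ij}^j=\langle \nabla_{F_i}F_j,F_j\rangle=\frac12 F_i(|F_j|^2)=0$, so the claim holds trivially with $\gamma_{ij}^j\equiv 0$. So assume $j\neq k$. Using orthonormality of the refined frame, $\Gamma_{ji}^k=\langle \nabla_{F_j}F_i,F_k\rangle=-\langle F_i,\nabla_{F_j}F_k\rangle=-\Gamma_{jk}^i$; since $[j]=[k]\neq[i]$, equation (\ref{eq37}) gives $\Gamma_{jk}^i=0$, hence $\Gamma_{ji}^k=0$. Substituting this into (\ref{eq5}) and using $\zeta_j=\zeta_k$ (which holds because $[j]=[k]$, by (\ref{x3j0dk})) yields $F_1(\Gamma_{ij}^k)=(\zeta_k-\zeta_i-\zeta_j)\Gamma_{ij}^k=-\zeta_i\,\Gamma_{ij}^k$, the relation highlighted in the text.

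Next I would integrate this along the flow of $F_1=\nabla s$. In the coordinates $(s,x_2,\dots,x_n)$ of Lemma \ref{threesolbx}(v) one has $F_1=\partial/\partial s$, and $\zeta_i$ depends on $s$ only by Lemma \ref{abc60byx}, so for each fixed $x$ the equation reads $\partial_s\Gamma_{ij}^k=-\zeta_i(s)\,\Gamma_{ij}^k$, a linear scalar ODE with solution $\Gamma_{ij}^k(s,x)=\Gamma_{ij}^k(0,x)\,\exp\!\big(-\int_0^s\zeta_i(v)\,dv\big)$; the "constant of integration" is a genuine function of the transverse coordinates $x$. If $\zeta_i=0$ the exponential is $1$ and we may take $\gamma_{ij}^k(x):=\Gamma_{ij}^k(0,x)$, giving (ii). If $\zeta_i\neq 0$, then by (\ref{x4}) and (\ref{fe1}) we have $\exp(-\int_0^s\zeta_i(v)\,dv)=a_i/(h(s)+c_i)$ for a constant $a_i$, so $\Gamma_{ij}^k=\gamma_{ij}^k(x)/(h+c_i)$ with $\gamma_{ij}^k(x):=a_i\,\Gamma_{ij}^k(0,x)$, which is (i).

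I do not expect a serious obstacle here: the only genuinely new input compared with Lemma \ref{L11} is the vanishing of $\Gamma_{ji}^k$, which collapses the system (\ref{eq5}) to a single scalar equation. The points worth stating explicitly are the separate (trivial) treatment of $j=k$, and the fact that the integration along the $s$-direction records the free parameter precisely as a function of $x=(x_2,\dots,x_n)$.
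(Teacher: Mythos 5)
Your proposal is correct and follows essentially the same route as the paper: the key step in both is that $\Gamma_{ji}^k=-\Gamma_{jk}^i=0$ by (\ref{eq37}) when $[j]=[k]\neq[i]$, which together with $\zeta_j=\zeta_k$ reduces (\ref{eq5}) to the scalar equation $F_1(\Gamma_{ij}^k)=-\zeta_i\,\Gamma_{ij}^k$, and then one integrates along $s$ using (\ref{x4}) and (\ref{fe1}) exactly as you describe. Your explicit handling of the trivial case $j=k$ and of the integration constant as a function of $x$ just spells out what the paper leaves implicit.
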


One may view that  the formulas in Lemma \ref{L10}$\sim$Lemma \ref{L12} and (\ref{eq37})   define  ${\gamma}_{ij}^k(x)$, $i,j,k >1$ uniquely, given $\{ F_i\}$.

\section{Analysis of the vacuum static space equation}
Let $(M,g,f)$ be an $n$-dimensional Riemannian manifold with harmonic curvature satisfying (\ref{0002bx}) with non-constant $f$.
\begin{lemma}\label{77bx01}
For a refined adapted frame field $\{F_j\}$
in an open subset $V$ of $M_{r} \cap \{ \nabla f \neq 0  \}$, the following holds; for $ i \geq 2$,
\begin{eqnarray}
 \ \ \ \  R_{ii}
= -\zeta_i^{'}  -  \zeta_i^2  + \sum_{j>1, j \neq i}^n \{  -\zeta_i \zeta_j + F_i \Gamma_{jj}^i + F_j \Gamma_{ii}^j -(\Gamma_{ii}^j)^2  -(\Gamma_{jj}^i)^2 \} \label{y01}  \\
-\sum_{j>1, j \neq i}^n \{ \sum_{k \neq 1, i, j } \Gamma_{jj}^k \Gamma_{ii}^k
  + \Gamma_{ij}^k\Gamma_{jk}^i +( \Gamma_{ij}^k-\Gamma_{ji}^k) \Gamma_{kj}^i \}. \hspace{1.2cm}  \nonumber
\end{eqnarray}

\end{lemma}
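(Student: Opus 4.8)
The plan is to compute the Ricci curvature $R_{ii} = R(F_i,F_i) = \sum_{k} R(F_k, F_i, F_i, F_k)$ directly from the definition of the curvature tensor in terms of the connection coefficients $\Gamma_{ij}^k$, isolating the $k=1$ term (which is already known from \eqref{x23}) and expanding the remaining terms $k>1$, $k\neq i$. Recall the standard formula for an orthonormal frame: for distinct indices $a,b$,
\[
R(F_a,F_b,F_b,F_a) = F_a\Gamma_{bb}^a - F_b\Gamma_{ab}^a + \sum_m \left(\Gamma_{ab}^m\Gamma_{mb}^a + \Gamma_{bb}^m\Gamma_{am}^a - \Gamma_{ba}^m\Gamma_{bm}^a - \Gamma_{aa}^m\Gamma_{bm}^?\right)
\]
so first I would write down carefully the correct expansion of $R(F_j,F_i,F_i,F_j)$ in terms of $\Gamma$'s and their $F_1$- and $F_j$-derivatives, using the convention $\Gamma_{ij}^k = \langle\nabla_{F_i}F_j,F_k\rangle$ and the antisymmetry $\Gamma_{ij}^k = -\Gamma_{ik}^j$.

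Next, I would substitute the special structure available here. The $k=1$ slot gives $R(F_1,F_i,F_i,F_1) = -\zeta_i' - \zeta_i^2$ by \eqref{x23}, which accounts for the first two terms of \eqref{y01}. For each $j>1$, $j\neq i$, the term $R(F_j,F_i,F_i,F_j)$ contributes a piece involving $\zeta$'s through $\Gamma_{jj}^1 = -\zeta_j$ and $\Gamma_{ii}^1 = -\zeta_i$ (whence the $-\zeta_i\zeta_j$ term, coming from $\Gamma_{jj}^1\Gamma_{i1}^i$-type products, using $\Gamma_{i1}^i = \langle\nabla_{F_i}F_1,F_i\rangle = \zeta_i$ and $\Gamma_{1i}^i = 0$ from \eqref{e07u}), a piece involving the "diagonal" derivatives $F_i\Gamma_{jj}^i + F_j\Gamma_{ii}^j$, a piece with the squared terms $-(\Gamma_{ii}^j)^2 - (\Gamma_{jj}^i)^2$ (these arise from reindexing products like $\Gamma_{jj}^m\Gamma_{im}^i$ with $m=i$ using antisymmetry, and from $\Gamma_{ji}^m\Gamma_{jm}^i$-type terms), and finally the genuinely three-index piece $\sum_{k\neq 1,i,j}\left(\Gamma_{jj}^k\Gamma_{ii}^k + \Gamma_{ij}^k\Gamma_{jk}^i + (\Gamma_{ij}^k-\Gamma_{ji}^k)\Gamma_{kj}^i\right)$. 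The key simplifications to invoke repeatedly are $\nabla_{F_1}F_i = 0$ for $i>1$ (equation \eqref{e07u}), which kills all $\Gamma_{1i}^\bullet$ terms, and the relation $\nabla_{F_i}F_1 = \zeta_i F_i$ from \eqref{lambda06ax}, together with antisymmetry of $\Gamma$ in its last two lower indices.

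The main obstacle will be the bookkeeping of the mixed terms: sorting which products in the raw curvature expansion collapse onto the $\zeta$-terms, which onto the squared $\Gamma$-terms via the antisymmetry $\Gamma_{ij}^k = -\Gamma_{ik}^j$, and which survive as the genuine triple sum, all while tracking signs correctly. I would organize this by splitting the inner sum over the curvature's free index into the cases (index $=1$), (index $=i$ or $j$, handled by antisymmetry), and (index $\notin\{1,i,j\}$), and treat the derivative terms $F_a\Gamma_{bb}^a$ separately from the quadratic terms. One should also double-check that no $F_1$-derivative of a $\Gamma$ survives—this is guaranteed because $R_{ii}$ has no such term in \eqref{y01}, and indeed the derivative terms appearing are only $F_i$ and $F_j$ derivatives, consistent with \eqref{e07u}. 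Once the raw expansion is written out and these three substitutions are applied term by term, \eqref{y01} follows by direct comparison.
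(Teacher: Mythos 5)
Your plan coincides with the paper's proof: the paper simply asserts that one can ``directly compute'' $R(F_i,F_j,F_j,F_i)$ for $i,j>1$ in terms of the $\Gamma_{ij}^k$ and their $F_i$-, $F_j$-derivatives, then sums over $j$ and adds the $j=1$ contribution $-\zeta_i'-\zeta_i^2$ from (\ref{x23}) --- exactly your decomposition. The only difference is that you spell out the bookkeeping (via (\ref{e07u}), (\ref{lambda06ax}) and the antisymmetry $\Gamma_{ij}^k=-\Gamma_{ik}^j$) that the paper leaves implicit, so your outline is a correct, if unexecuted, version of the same argument.
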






\begin{proof}
For distinct $i, j \in \{2,3, \cdots , n \}$, we can directly compute

$R_{ijji}:= R(F_i, F_j, F_j, F_i)
= -\zeta_i \zeta_j + F_i \Gamma_{jj}^i  - F_j \Gamma_{ij}^i -(\Gamma_{ii}^j)^2  -(\Gamma_{jj}^i)^2
+ \sum_{k \neq 1, i, j } \{ \Gamma_{jj}^k \Gamma_{ik}^i
   -\Gamma_{ij}^k\Gamma_{jk}^i  -( \Gamma_{ij}^k-\Gamma_{ji}^k) \Gamma_{kj}^i \}
$. From (\ref{x23}) we can get (\ref{y01}).



\end{proof}
If $ [i], [j], [k]$ are pairwise different, then from  Lemma \ref{abc60x} (ii) we can get
\begin{eqnarray} \label{s2}
 \Gamma_{ij}^k\Gamma_{jk}^i  +( \Gamma_{ij}^k-\Gamma_{ji}^k) \Gamma_{kj}^i =-2\Gamma_{ij}^k\Gamma_{ji}^k
 \end{eqnarray}
\subsection{Analysis of the vacuum static space equation for $R_{ii}$ with $i >m$}
In this subsection we study   $R_{ii}$  when $ i > m$.


Suppose $i >m$. By the definition of $m$ in Section 3, $\zeta_i =0$ and $F_i  =  e_i$  from (\ref{e07}).   By (\ref{x5}),  $R_{ii} = \frac{R}{n-1}$.
From Lemma \ref{77bx01},  we then get

\begin{equation} \label{eq09}
 \frac{R}{n-1} = \sum_{j>1, j \neq i} \{  F_i \Gamma_{jj}^i  +F_j \Gamma_{ii}^j  -(\Gamma_{ii}^j)^2  -(\Gamma_{jj}^i)^2- \sum_{k \neq 1, i, j }\Gamma_{jj}^k \Gamma_{ii}^k
 +\Gamma_{ij}^k\Gamma_{jk}^i +( \Gamma_{ij}^k-\Gamma_{ji}^k) \Gamma_{kj}^i \}.
\end{equation}
We analyze the terms in the RHS of  (\ref{eq09}).

If $j >m$, then $[j]=[i]$ and
from Lemma \ref{L10}, $\Gamma_{jj}^i =  {\gamma}_{jj}^i(x).$
Again from Lemma \ref{L10}, $\Gamma_{ij}^k =  {\gamma}_{ij}^k(x)$ when $i, j,k > m$.

If  $[j] \neq [i]$  and  $[k] =[j]$, then $ -\Gamma_{ij}^k\Gamma_{jk}^i  -( \Gamma_{ij}^k-\Gamma_{ji}^k) \Gamma_{kj}^i =0$ from (\ref{eq37}).

If  $[j] \neq [i]$  and   $[k] =[i]$, then  $ -\Gamma_{ij}^k\Gamma_{jk}^i  -( \Gamma_{ij}^k-\Gamma_{ji}^k) \Gamma_{kj}^i =0$ also from (\ref{eq37}).

If $ [i], [j], [k]$ are pairwise different, then  $ -\Gamma_{ij}^k\Gamma_{jk}^i  -( \Gamma_{ij}^k-\Gamma_{ji}^k) \Gamma_{kj}^i =2\Gamma_{ij}^k\Gamma_{ji}^k =\frac{ 2 {\gamma}_{ij}^k(x) {\gamma}_{ji}^k(x)}{ h+ c_j  } $ from (\ref{s2}) and Lemma \ref{L11}.

From the above discussion,
 we use  (\ref{eq37}) to rewrite (\ref{eq09})  as below.
\begin{eqnarray*} \frac{R}{n-1} =  \sum_{j \in [i], j \neq i} \{   e_i {\gamma}_{jj}^i + e_j {\gamma}_{ii}^j - ({\gamma}_{ii}^j)^2- ({\gamma}_{jj}^i)^2\}
  - \sum_{j \in [i], j \neq i} \{\sum_{k \neq 1, i, j , k \in [i]}  {\gamma}_{jj}^k {\gamma}_{ii}^k\}\\
 - \sum_{j \in [i], j \neq i} \{\sum_{k \neq 1, i, j, k \in [i]}
   {\gamma}_{ij}^k  {\gamma}_{jk}^i  + ( {\gamma}_{ij}^k- {\gamma}_{ji}^k) {\gamma}_{kj}^i \}  \hspace{3.3cm}\\
   +\sum_{1<j \notin [i]} \frac{1}{h+ c_j}\{\sum_{1<k \notin [i]\cup [j] }
 2{\gamma}_{ij}^k(x) {\gamma}_{ji}^k(x) \}. \hspace{3.5cm}
\end{eqnarray*}
In the above, all terms except the last are independent of $s$. By Lemma \ref{L41} below, we obtain

\begin{lemma} \label{jf1}   For $i >m$ and for each fixed $[l] \neq [i]$, it holds that

 $\sum_{j \in [l] }  \sum_{1<k \notin [i]\cup [j]   }{\gamma}_{ji}^k(x){\gamma}_{ij}^k(x) =0. $  \end{lemma}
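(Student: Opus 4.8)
The plan is to isolate the only $s$-dependent term on the right-hand side of the rewritten formula for $R/(n-1)$ displayed just before the statement, and to exploit the fact that the left-hand side is a constant. After expressing $\Gamma_{ij}^k$ via Lemmas \ref{L10}--\ref{L12}, every summand on the right is either a function of $x$ only (because $\zeta_i=\zeta_j=0$ for $i,j>m$) or, in the pairwise-distinct case $[i],[j],[k]$ all different, equals $\frac{2\gamma_{ij}^k(x)\gamma_{ji}^k(x)}{h(s)+c_j}$ by (\ref{s2}) and Lemma \ref{L11}. Hence, subtracting the $s$-independent part from both sides of
\begin{eqnarray*}
\frac{R}{n-1} \ =\ (\text{terms in }x\text{ only}) \ +\ \sum_{1<j\notin[i]}\frac{1}{h+c_j}\Bigl\{\sum_{1<k\notin[i]\cup[j]}2\gamma_{ij}^k(x)\gamma_{ji}^k(x)\Bigr\},
\end{eqnarray*}
we get that $\sum_{1<j\notin[i]}\frac{1}{h(s)+c_j}\,A_j(x)=0$ identically in $(s,x)$, where $A_j(x):=\sum_{1<k\notin[i]\cup[j]}2\gamma_{ij}^k(x)\gamma_{ji}^k(x)$ depends only on $x$ and, crucially, $A_j=A_{j'}$ whenever $[j]=[j']$ (since the defining sum only sees the class $[j]$, and $\gamma_{ij}^k\gamma_{ji}^k$ is symmetric in the roles consistent with the class). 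Grouping the sum over the distinct classes $[l]\neq[i]$ that occur, with $c_{[l]}$ the common value of $c_j$ on that class and $m_{[l]}$ its multiplicity, this reads $\sum_{[l]\neq[i]}\frac{m_{[l]}}{h(s)+c_{[l]}}\,B_{[l]}(x)=0$ where $B_{[l]}(x)=\sum_{j\in[l]}\sum_{1<k\notin[i]\cup[j]}\gamma_{ji}^k(x)\gamma_{ij}^k(x)$ up to the harmless constant factor $2$.

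Next I would invoke Lemma \ref{L41} (the linear-independence lemma referenced in the text, asserting that the functions $s\mapsto\frac{1}{h(s)+c}$ for distinct values of $c$ are linearly independent over functions of $x$, which holds because $h'=f\not\equiv 0$ so $h$ is non-constant and these rational functions of $h$ have distinct poles). Applied to the identity $\sum_{[l]\neq[i]}\frac{m_{[l]}}{h(s)+c_{[l]}}\,B_{[l]}(x)=0$, with the $c_{[l]}$ pairwise distinct across the classes and each coefficient $\frac{m_{[l]}B_{[l]}(x)}{1}$ independent of $s$, linear independence forces $m_{[l]}B_{[l]}(x)=0$, hence $B_{[l]}(x)=0$ for every class $[l]\neq[i]$ and every $x$. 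Unwinding $B_{[l]}$ gives precisely $\sum_{j\in[l]}\sum_{1<k\notin[i]\cup[j]}\gamma_{ji}^k(x)\gamma_{ij}^k(x)=0$, which is the claimed identity.

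One subtlety to be careful about: the classes $[j]$ with $c_j=0$ (i.e. $\zeta_j=0$, $j>m$) contribute a \emph{constant} term rather than a genuine $\frac{1}{h+c_j}$ pole, so they belong to the $s$-independent block and must be handled separately — but for those classes the statement of the lemma is about $[l]\neq[i]$ with $[l]$ possibly equal to another trivial class, and one checks directly from the rewriting (the $k\in[i]$ terms were already collected into the $x$-only sums via (\ref{eq37}) and Lemma \ref{L10}) that their contribution is also forced to vanish by matching the $s$-constant parts, or more simply that the lemma as stated only concerns the pole part. The main obstacle is thus the bookkeeping: correctly certifying that $A_j$ depends only on $[j]$ and not on the individual index $j$, and that no cross term was double-counted or dropped when passing from the index sum to the class sum — once that is clean, the linear-independence step (Lemma \ref{L41}) is immediate. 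Everything else is the routine curvature expansion already carried out in Lemma \ref{77bx01}.
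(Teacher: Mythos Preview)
Your strategy is exactly the paper's: the displayed identity for $R_{ii}$ with $i>m$ has only one $s$-dependent block, and Lemma~\ref{L41} kills it class by class. A few corrections to the bookkeeping. The claim $A_j=A_{j'}$ for $[j]=[j']$ is false (the summand $\gamma_{ij}^k\gamma_{ji}^k$ depends on the individual index $j$, not just on $[j]$) and is also unnecessary: all you need is $c_j=c_{j'}$, so grouping by class gives $\sum_{[l]\neq[i]}\frac{1}{h+c_{[l]}}\bigl(\sum_{j\in[l]}A_j\bigr)$ with coefficient $2B_{[l]}$ and \emph{no} extra factor $m_{[l]}$. Also, subtracting the $x$-only block leaves a function of $x$ on the right, not $0$; Lemma~\ref{L41} (applied for each fixed $x$, with $h$ as the variable) still forces $B_{[l]}(x)=0$ together with the vanishing of that residual constant, so this slip is harmless. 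Finally, the worry in your last paragraph is unfounded: for $i>m$ the class $[i]$ already contains every index $>m$, so $j\notin[i]$ forces $2\le j\le m$ and hence $\zeta_j\neq0$; there is no separate ``$c_j=0$'' case to handle.
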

The next lemma is elementary; to prove it, one can multiply  by some polynomials with factors $(x+ c_i)$ and compare LHS and RHS.
\begin{lemma} \label{L41}
Let  $n_0$ and  $k_0$ be natural numbers and $c_i$ be pairwise  different  numbers.
Suppose that $\sum_{i=1}^{n_0} \sum_{t=1}^{k_0} \frac{\alpha_{i, t}}{(x+ c_i)^t} = b_0 + b_1 x$, for any $x$ in a non-empty open interval $I$, and constants $b_0,  b_1$, $\alpha_{i, t} $. Then
$\alpha_{i, t} =b_0 = b_1 =0, $ for any $1 \leq i \leq n_0$ and any    $1 \leq t \leq k_0$.
\end{lemma}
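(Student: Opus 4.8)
The statement to prove is Lemma \ref{L41}: a partial-fraction-plus-linear expression vanishing on an open interval forces all coefficients to vanish.

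\medskip
\noindent\textbf{Proof plan.} The plan is to clear denominators and exploit polynomial identity theorems together with the distinctness of the $c_i$. First I would note that since the identity $\sum_{i=1}^{n_0}\sum_{t=1}^{k_0}\frac{\alpha_{i,t}}{(x+c_i)^t}=b_0+b_1x$ holds on a non-empty open interval $I$ and both sides are rational functions, it holds as an identity of rational functions (two rational functions agreeing on infinitely many points are equal), hence on all of $\mathbb{C}\setminus\{-c_1,\dots,-c_{n_0}\}$. Then I would multiply both sides by the polynomial $P(x):=\prod_{i=1}^{n_0}(x+c_i)^{k_0}$ to obtain a polynomial identity $\sum_{i,t}\alpha_{i,t}(x+c_i)^{k_0-t}\prod_{j\neq i}(x+c_j)^{k_0}=(b_0+b_1x)P(x)$ valid for all $x$.

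\medskip
\noindent\textbf{Key steps.} The second step is to extract the coefficients $\alpha_{i,t}$ one index $i$ at a time. Fix $i$ and set $x=-c_i$: every term on the left with index $j\neq i$ carries a factor $(x+c_i)^{k_0}$ and vanishes (using $c_j\neq c_i$ so these are genuine vanishing factors), and on the right $(b_0+b_1x)P(x)$ also vanishes since $P$ has the factor $(x+c_i)^{k_0}$ with $k_0\geq 1$; the surviving left-hand terms are $\sum_{t=1}^{k_0}\alpha_{i,t}(x+c_i)^{k_0-t}\prod_{j\neq i}(x+c_j)^{k_0}$. Dividing the whole polynomial identity by $\prod_{j\neq i}(x+c_j)^{k_0}$ — which is legitimate as a polynomial division after checking divisibility, or more cleanly by localizing/evaluating derivatives at $-c_i$ — one is left comparing $\sum_{t=1}^{k_0}\alpha_{i,t}(x+c_i)^{k_0-t}$ against a polynomial divisible by $(x+c_i)^{k_0}$. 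Since the former has degree at most $k_0-1$, it must be the zero polynomial, forcing $\alpha_{i,t}=0$ for all $t$. Running this over all $i=1,\dots,n_0$ kills every $\alpha_{i,t}$; the original identity then reduces to $0=b_0+b_1x$ on $I$, whence $b_0=b_1=0$.

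\medskip
\noindent\textbf{Main obstacle.} There is no deep obstacle here — the lemma is genuinely elementary — but the one point requiring a little care is the bookkeeping when passing from the cleared-denominator polynomial identity to the conclusion $\alpha_{i,t}=0$: one must justify the division by $\prod_{j\neq i}(x+c_j)^{k_0}$, which is where the hypothesis that the $c_i$ are pairwise distinct is essential (otherwise the factors would collide and the separation argument collapses). An alternative that avoids any division is to argue by induction on $n_0$ and on $k_0$: multiply by $(x+c_1)^{k_0}$, evaluate at $x=-c_1$ to get $\alpha_{1,k_0}=0$, then differentiate and re-evaluate to peel off $\alpha_{1,k_0-1}$, and so on; this is the "multiply by polynomials with factors $(x+c_i)$ and compare" route indicated in the paper. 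Either way the argument is short and purely formal.
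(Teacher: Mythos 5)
Your proof is correct and follows essentially the same route the paper indicates (the paper only sketches it in one line: multiply by polynomials with factors $(x+c_i)$ and compare). Clearing denominators, using coprimality of $(x+c_i)^{k_0}$ with $\prod_{j\neq i}(x+c_j)^{k_0}$, and the degree bound on $\sum_t \alpha_{i,t}(x+c_i)^{k_0-t}$ gives a complete and rigorous version of exactly that argument.
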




\subsection{Analysis of the vacuum static space equation for $R_{ii}$ with $i \leq m$}
In this subsection we study   $R_{ii}$  when $2 \leq i \leq m$.
From (\ref{eq37}),  $R_{ii}$ in       Lemma \ref{77bx01} becomes
 \begin{eqnarray}
 R_{ii}  = -\zeta_i^{'}  -  \zeta_i^2  + \sum_{1<j\leq m, j \neq i} \{  -\zeta_i \zeta_j + F_i \Gamma_{jj}^i  + F_j \Gamma_{ii}^j -(\Gamma_{ii}^j)^2  -(\Gamma_{jj}^i)^2 \} \nonumber \\
-\sum_{1<j\leq m, j \neq i}\{ \sum_{k \neq 1, i, j } \Gamma_{jj}^k \Gamma_{ii}^k
  +\Gamma_{ij}^k\Gamma_{jk}^i +( \Gamma_{ij}^k-\Gamma_{ji}^k) \Gamma_{kj}^i \} \hspace{1.5cm} \label{rii} \\
    - \sum_{j> m} \{     \sum_{k \neq 1, i, j }
   \Gamma_{ij}^k\Gamma_{jk}^i  +( \Gamma_{ij}^k-\Gamma_{ji}^k) \Gamma_{kj}^i \}  \hspace{3.4cm} \nonumber
\end{eqnarray}
We shall treat  terms in the RHS of (\ref{rii}).
For $2 \leq i \leq m$,   from (\ref{e07}) and  (\ref{fe1}),
we have $F_i   =  \frac{a_i}{h + c_i}   e_i$.
 Using (\ref{eq37}) and
    $\Gamma_{ii}^j = \frac{ {\gamma}_{ii}^j }{h+ c_i}$ for $j \in [i]$ by Lemma \ref{L10}, we get
 \begin{eqnarray} \label{aq01}
\sum_{1<j\leq m, j \neq i} \{  F_i \Gamma_{jj}^i  + F_j \Gamma_{ii}^j -(\Gamma_{ii}^j)^2  -(\Gamma_{jj}^i)^2 \}\hspace{3cm}  \\
 =\frac{1 }{(h+ c_i)^2} \sum_{j \in [i], j \neq i} \{  a_i e_i {\gamma}_{jj}^i  + a_j e_j {\gamma}_{ii}^j -  ({\gamma}_{ii}^j)^2-  ({\gamma}_{jj}^i)^2\} . \hspace{0cm} \nonumber
\end{eqnarray}
Using (\ref{eq37}) and Lemma \ref{L10}, we also get
 \begin{eqnarray}\label{aq02}
\sum_{1<j\leq m, j \neq i} \{ \sum_{k \neq 1, i, j }  \Gamma_{jj}^k \Gamma_{ii}^k \}=\sum_{j \in [i], j \neq i} \{\sum_{k \neq 1, i, j , k \in [i]} \frac{{\gamma}_{jj}^k }{h+ c_j} \frac{{\gamma}_{ii}^k }{h+ c_i} \}.
\end{eqnarray}
From (\ref{eq37}), (\ref{s2}), Lemma \ref{L10} and  Lemma \ref{L11}, we have
 \begin{eqnarray}\label{aq03}
 \\
\sum_{1<j \leq m, j \neq i}  \{ \sum_{k \neq 1, i, j } \Gamma_{ij}^k\Gamma_{jk}^i  +( \Gamma_{ij}^k-\Gamma_{ji}^k) \Gamma_{kj}^i \}   \hspace{6.5cm}  \nonumber\\
  = \sum_{j \in [i], j \neq i} \{\sum_{k \neq 1, i, j, k \in [i]}
   \Gamma_{ij}^k\Gamma_{jk}^i  +( \Gamma_{ij}^k-\Gamma_{ji}^k) \Gamma_{kj}^i \}
 -\sum_{1<j \notin [i]}^m \{\sum_{1< k \notin [i]\cup [j]}
   2\Gamma_{ji}^k \Gamma_{ij}^k \} \nonumber  \hspace{1.4cm}  \\
     = \sum_{j \in [i], j \neq i} \{\sum_{k \neq 1, i, j, k \in [i]}
   \frac{{\gamma}_{ij}^k  {\gamma}_{jk}^i + {\gamma}_{ij}^k{\gamma}_{kj}^i - {\gamma}_{ji}^k{\gamma}_{kj}^i }{(h+ c_i)^2}  \}
   -\sum_{1<j \notin [i]}^m \{\sum_{1< k \notin [i]\cup [j] }
   2\frac{ {\gamma}_{ji}^k(x)}{ h+ c_j  } \frac{ {\gamma}_{ij}^k(x)}{ h+ c_i  } \}. \nonumber
\end{eqnarray}
Using (\ref{eq37}), (\ref{s2}) and  Lemma \ref{L11}, we get
 \begin{eqnarray}\label{aq04}
 \sum_{j> m} \{     \sum_{k \neq 1, i, j }
   \Gamma_{ij}^k\Gamma_{jk}^i + ( \Gamma_{ij}^k-\Gamma_{ji}^k) \Gamma_{kj}^i \} \hspace{3.5cm}  \\
   =-\sum_{j> m} \{     \sum_{1< k \notin [i]\cup [j]}
   2\Gamma_{ij}^k\Gamma_{ji}^k \}
    =- \sum_{j >m} \{\sum_{1< k \notin [i]\cup [j] }
     2\frac{{\gamma}_{ij}^k }{h+ c_i}   {\gamma}_{ji}^k  \}.  \nonumber
\end{eqnarray}
For $2 \leq i \leq m$, from (\ref{aq01})$\sim$(\ref{aq04}),  (\ref{x4}) and  (\ref{x5}),  the formula (\ref{rii}) becomes
 \begin{eqnarray} \label{aq7}
 \\
\frac{h^{''} }{ h  +c_i } +  \frac{R}{n-1}
   = -\frac{h^{''} }{ h  +c_i }  + \sum_{1<j\leq m, j \neq i}  \{  -\frac{h^{'} }{h+ c_i}  \frac{h^{'} }{h+ c_j} \}  \hspace{5cm}  \nonumber \\
   +\frac{1 }{(h+ c_i)^2} \sum_{j \in [i], j \neq i} \{  a_ie_i {\gamma}_{jj}^i  +a_j e_j {\gamma}_{ii}^j -  ({\gamma}_{ii}^j)^2-  ({\gamma}_{jj}^i)^2\} \hspace{4.5cm}  \nonumber \\
   - \sum_{j \in [i], j \neq i} \{\sum_{k \neq 1, i, j , k \in [i]} \frac{{\gamma}_{jj}^k }{h+ c_j} \frac{{\gamma}_{ii}^k }{h+ c_i} \}
- \sum_{j \in [i], j \neq i} \{\sum_{k \neq 1, i, j, k \in [i]}
   \frac{{\gamma}_{ij}^k  {\gamma}_{jk}^i +{\gamma}_{ij}^k{\gamma}_{kj}^i -{\gamma}_{ji}^k{\gamma}_{kj}^i }{(h+ c_i)^2}  \} \nonumber   \hspace{1.5cm}  \\
   +\sum_{1<j \notin [i]}^m \{\sum_{1< k \notin [i] \cup [j] }
   2\frac{ {\gamma}_{ji}^k(x)}{ h+ c_j  } \frac{ {\gamma}_{ij}^k(x)}{ h+ c_i  } \}
   +\sum_{j >m} \{\sum_{1< k \notin [i] \cup [j] }
     2\frac{{\gamma}_{ij}^k }{h+ c_i}   {\gamma}_{ji}^k  \}   \hspace{3.8cm}  \nonumber
\end{eqnarray}


 \bigskip
Multiplying (\ref{aq7})  by $h+ c_i$,  we use  (\ref{ff01}) and  (\ref{ff02}) to have

\begin{eqnarray}  \label{30b92}
\\
 -\frac{2RQ}{(n-1)H} +\frac{2a}{H} +  \frac{R(h + c_i )}{n-1}  \hspace{8.5cm} \nonumber \\
  + \sum_{1<j\leq m, j \neq i}   \frac{1 }{(h+ c_j) } \{(h^{'}(0))^2- \frac{2R}{n-1}\int_{h(0)}^{h(s)}  \frac{Q}{H}dh +\int_{h(0)}^{h(s)} \frac{2a}{H} dh  \} \hspace{1.5cm} \nonumber \\
=\frac{1 }{(h+ c_i)} \sum_{j \in [i], j \neq i} \{ a_i e_i {\gamma}_{jj}^i  +a_j e_j {\gamma}_{ii}^j - ({\gamma}_{ii}^j)^2-  ({\gamma}_{jj}^i)^2\}  - \sum_{j \in [i], j \neq i} \{ \sum_{k \neq 1, i, j , k \in [i]}\frac{{\gamma}_{jj}^k {\gamma}_{ii}^k  }{h+ c_j} \} \nonumber\\
  -  \sum_{j \in [i], j \neq i} \frac{1}{h+c_i} \{\sum_{k \neq 1, i, j, k \in [i]}
     {\gamma}_{ij}^k{\gamma}_{jk}^i  +  {\gamma}_{ij}^k{\gamma}_{kj}^i - {\gamma}_{ji}^k {\gamma}_{kj}^i \}   \nonumber  \hspace{4cm} \\
   +\sum_{1<j \notin [i]}^m  \{ \sum_{1< k \notin [i] \cup [j] }
   2 \frac{ {\gamma}_{ji}^k(x){\gamma}_{ij}^k(x)}{ h+ c_j }\}
  +\sum_{j >m} \{  \sum_{1< k \notin [i] \cup [j] }
     2  {\gamma}_{ij}^k  {\gamma}_{ji}^k \}.\nonumber \hspace{2.8cm}
   \end{eqnarray}

We shall write in detail the LHS of (\ref{30b92}).
Let
$H= \Pi_{i=2}^m (h+ c_i) $ expand to $H= h^{m-1} + \alpha_{m-2} h^{m-2}   + \cdots + \alpha_0 $ with $\alpha_{m-2} =  \sum_{i=2}^m c_i$.

Let $[m_1], \cdots,  [m_d]$ be all the distinct equivalence classes with $2 \leq m_1  <\cdots < m_d \leq m $. For simplicity we denote $[m_l]$ by $[l]$ so that for $l =1, \cdots,  d$, we write  $c_{[l]}:= c_j$ for any $j$ in $[m_l]$.  Let $n_{[l]}$ be the dimension of $\lambda_{m_l}$-eigenspace so that $\sum_{l=1}^d  n_{[l]}= m-1$. Then we can write
$H = \Pi_{l=1}^d (h+ c_{[l]})^{n_{[l]}}  $. For its reciprocal $\frac{1}{H}$, we can do the rational function expansion;
\begin{eqnarray}  \label{30g6}
\frac{1}{H} =  \sum_{l=1}^d  \sum_{t=1}^{n_{[l]}} \frac{\alpha_{l, t}}{(h+ c_{[l]})^{t}},\end{eqnarray}
where  $a_{l, t}$ is a number depending on $l$ and $t$. Note that $\alpha_{l, n_{[l]}} \neq 0$.  Integration gives, for a constant $\tilde{c}_1$,

$\int_{h(0)}^{h(s)} \frac{1}{H} dh  = \sum_{l=1}^d \{  \sum_{t=2}^{n_{[l]}} \frac{\alpha_{l, t}}{(1-t )(h+ c_l)^{t-1}}+   \alpha_{l,1} \ln |h+ c_{[l]}| \} + \tilde{c}_1.$

 As $Q(h)$ is a function of $h$ such that
$\frac{dQ}{dh} = H$, we write
$Q=  \frac{h^m}{m} +\alpha_{m-2}\frac{h^{m-1}}{m-1} + \cdots + \alpha_0 h$. Division gives
$\frac{Q}{H}  = \frac{\frac{h^m}{m} +\alpha_{m-2}\frac{h^{m-1}}{m-1} + \cdots + \alpha_0 h}{h^{m-1} + \alpha_{m-2} h^{m-2}   + \cdots + \alpha_0}$, so we can write the  following rational function expansion
\begin{eqnarray}  \label{30b97}
\frac{Q}{H}  =   \frac{h}{m}  + \frac{\alpha_{m-2}}{m(m-1)}+  \sum_{l=1 }^d   \sum_{t=1}^{n_{[l]}} \frac{b_{l, t}}{(h+ c_{[l]})^{t}},
   \end{eqnarray}
 where  $b_{l, t}$ is a number depending on $l$ and $t$. Integration gives, for a constant $\tilde{c}_2$,

 $\int_{h(0)}^{h(s)} \frac{Q}{H} dh  =  \frac{h^2}{2m}  + \frac{\alpha_{m-2} h}{m(m-1)}+   \sum_{l=1 }^d \{  \sum_{t=2}^{n_{[l]}} \frac{b_{l, t}}{(1-t )(h+ c_{[l]})^{t-1}}+   b_{l,1} \ln |h+ c_{[l]}| \}+ \tilde{c}_2$.

\bigskip
From above discussion, the LHS of (\ref{30b92})  equals
\begin{eqnarray}  \label{803}
 \hspace{2.5cm} \\
-2 \frac{R}{(n-1)}\{ \frac{h}{m}  + \frac{\alpha_{m-2}}{m(m-1)}+  \sum_{l=1 }^d   \sum_{t=1}^{n_{[l]}} \frac{b_{l, t}}{(h+ c_{[l]})^{t}} \}
 +2a \{   \sum_{l=1}^d  \sum_{t=1}^{n_{[l]}} \frac{\alpha_{l, t}}{(h+ c_{[l]})^{t}} \}\nonumber \hspace{0.8cm} \\ +  \frac{R(h + c_i )}{n-1}
+\sum_{1<j\leq m, j \neq i}   \frac{ (h^{'}(0))^2 }{(h+ c_j) } -\sum_{1<j\leq m, j \neq i}   \frac{2R }{(h+ c_j) (n-1)} \big[  \frac{h^2}{2m}  + \frac{\alpha_{m-2} h}{m(m-1)} \big] \nonumber  \hspace{0cm} \\
-\sum_{1<j\leq m, j \neq i}   \frac{2R }{(h+ c_j) (n-1)} \big[     \sum_{l=1 }^d \{  \sum_{t=2}^{n_{[l]}} \frac{b_{l, t}}{(1-t )(h+ c_{[l]})^{t-1}}+   b_{l,1} \ln |h+ c_{[l]}| \}+ \tilde{c}_2 \big] \nonumber \\
 +\sum_{1<j\leq m, j \neq i}   \frac{2a }{(h+ c_j) }\big[\sum_{l=1}^d \{  \sum_{t=2}^{n_{[l]}} \frac{\alpha_{l, t}}{(1-t )(h+ c_{[l]})^{t-1}}+   \alpha_{l,1} \ln |h+ c_{[l]}| \} + \tilde{c}_1   \big]   . \nonumber \hspace{0.8cm}
   \end{eqnarray}

 (\ref{803}) is of the form $ \sum_{l=1}^d \{ \sum_{t=1}^{n_{[l]}} \frac{\gamma_{l, t}  }{(h+ c_{[l]} )^t } \} + \sum_{l=1}^d  \sum_{s=1}^d  \frac{ \delta_{l, s} \ln |h+ c_{[l]}|}{(h+ c_{[s]} )}+  \gamma_0 + \gamma_1 h $, where $\gamma_0$, $ \gamma_1$, $\gamma_{l, t}$ and $ \delta_{l, s}$  are constants. Here we use the fact  that if $c_j \neq c_p$,  then $\frac{1}{(h+c_j)(h+ c_p)^{t}} = \frac{a_0}{(h+c_j)}  + \sum_{k=1}^t \frac{a_k}{(h+ c_p)^{k}} $ for constants $a_0, \cdots, a_t$.

\bigskip

Meanwhile,  the  RHS of (\ref{30b92}) is of the form $ \sum_{l=1}^d   \frac{\beta_{l}(x) }{(h+ c_{[l]})} + \beta_0(x) $, where the functions $\beta_l(x)$ and $ \beta_0(x)$ are independent of $h$.

\section{Classification of vacuum static spaces with harmonic curvature}

In this section we shall prove Theorem \ref{locals}.
Equating LHS=RHS  of (\ref{30b92}), we   have

\begin{equation}  \label{30b98}
\sum_{l=1}^d \{ \sum_{t=1}^{n_{[l]}} \frac{\gamma_{l, t}  }{(h+ c_{[l]} )^t } \} + \sum_{l=1}^d \sum_{s=1}^d  \frac{ \delta_{l, s} \ln |h+ c_{[l]}|}{(h+ c_{[s]}) }+  \gamma_0 + \gamma_1 h= \sum_{l=1}^d   \frac{\beta_{l}(x)}{(h+ c_{[l]})} + \beta_0(x).
   \end{equation}
LHS is independent of $x$. Take derivative $\frac{\partial }{\partial x_i}$ on (\ref{30b98}) and apply Lemma  \ref{L41} to see  that $\beta_l(x)$ and $ \beta_0(x)$ are constants. Next, we need


\begin{lemma} \label{rel}
Let $R(x)$ and $p_l (x)$, $l=1, \cdots , k$, be rational functions defined on a non-empty open interval $I \subset \mathbb{R}, $
and
let $c_l$ be pairwise different real numbers.
If $R(x) = \sum_{l=1}^k  p_l(x) \ln|x+ c_l|$ for all $x$ on $I$, then $R(x)=0$ and $p_l(x) =0$.

\end{lemma}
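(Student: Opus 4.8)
The plan is to prove this by induction on $k$, the number of logarithmic terms, using differentiation to reduce the transcendence. For the base case $k=0$ the statement is vacuous (or $R(x)=0$ trivially). Assume the claim holds for $k-1$ logarithms. Suppose $R(x) = \sum_{l=1}^k p_l(x)\ln|x+c_l|$ on $I$ with the $c_l$ pairwise distinct and not all $p_l$ zero; we seek a contradiction, or rather we aim to force every $p_l\equiv 0$ and then $R\equiv 0$.

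The key step is to differentiate and clear denominators so as to isolate one logarithm. Differentiating once gives $R'(x) = \sum_{l=1}^k \bigl( p_l'(x)\ln|x+c_l| + \frac{p_l(x)}{x+c_l}\bigr)$, i.e. $R'(x) - \sum_{l=1}^k \frac{p_l(x)}{x+c_l} = \sum_{l=1}^k p_l'(x)\ln|x+c_l|$, which is again a rational function equal to a sum of $k$ logarithmic terms with rational coefficients $p_l'$. Iterating this is not immediately decisive because $\deg p_l$ need not drop to zero in a way that kills the term, so instead I would argue locally near the singular point $x=-c_k$: write $p_l(x) = (x+c_k)^{m_l} q_l(x)$ with $q_l(-c_k)\neq 0,\infty$ for $l<k$, and analyze the Laurent/Puiseux behavior of both sides as $x\to -c_k$. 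The function $\ln|x+c_k|$ has a genuine logarithmic singularity there, whereas $R(x)$ and the other terms $p_l(x)\ln|x+c_l|$ ($l\neq k$) are rational or rational-times-analytic near $-c_k$; matching the coefficient of the $\ln|x+c_k|$ singularity forces $p_k(x)\equiv 0$ (a rational function with a logarithmic factor can only match a rational function if its coefficient vanishes identically). Repeating for each $c_l$ in turn gives $p_l\equiv 0$ for all $l$, and then $R(x)=0$.

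Alternatively — and this is probably the cleanest route to write down — one can use a pure transcendence-degree argument: $\ln|x+c_1|,\dots,\ln|x+c_k|$ are algebraically independent over the field $\mathbb{R}(x)$ of rational functions (a standard fact, provable via the derivation $d/dx$: if there were an algebraic relation one differentiates and uses that $d(\ln|x+c_l|)/dx = 1/(x+c_l)$ together with partial fractions to derive a contradiction with the distinctness of the $c_l$). Given algebraic independence, the linear relation $R(x)\cdot 1 = \sum_l p_l(x)\ln|x+c_l|$ over $\mathbb{R}(x)$ forces all coefficients, including $R(x)$ on the left and each $p_l(x)$, to vanish. I would state this independence as a lemma (or cite it) and deduce the result in one line.

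The main obstacle is making the singularity-matching argument rigorous without hand-waving: one must be careful that a rational function $p_l(x)$, $l\neq k$, could a priori also be singular at $-c_k$, so the "coefficient of $\ln|x+c_k|$" must be extracted after multiplying through by a suitable power $(x+c_k)^N$ that clears all the rational poles at $-c_k$ simultaneously; then the only term still carrying a $\ln|x+c_k|$ is $(x+c_k)^N p_k(x)\ln|x+c_k|$, and its presence would contradict the analyticity (in the sense of admitting a convergent Laurent expansion) of the left side minus the remaining terms. I expect the transcendence-degree formulation to sidestep this bookkeeping entirely, so I would lead with that and relegate the elementary verification of algebraic independence of the $\ln|x+c_l|$ to a short paragraph using the derivation argument and Lemma \ref{L41}-style partial-fraction uniqueness.
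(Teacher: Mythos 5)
Your preferred route (the ``transcendence-degree'' formulation) is, once unpacked, the same argument the paper uses, but as written it is circular: the fact you propose to cite --- that $1,\ln|x+c_1|,\dots,\ln|x+c_k|$ admit no nontrivial linear relation over $\mathbb{R}(x)$ --- \emph{is} precisely Lemma \ref{rel}, so you cannot deduce the lemma from it ``in one line''; and full algebraic independence is both stronger than needed and considerably harder to establish than your parenthetical suggests. The content lies entirely in the derivation argument you sketch, and that sketch must actually be carried out: first clear denominators (multiply by the least common multiple $P$ of the denominators of the $p_l$, so the coefficients $Pp_l$ become polynomials --- a single differentiation does not terminate for rational coefficients, since $p_l'$ is again rational of no smaller complexity); then differentiate $m=\max_l\deg(Pp_l)$ times so the logarithmic coefficients become constants $a_l$, not all zero; finally differentiate once more to get $\sum_l a_l/(x+c_l)=(\text{rational})'$ and note that the derivative of a rational function has vanishing residue at every pole, so uniqueness of partial fractions forces every $a_l=0$, a contradiction. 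This is exactly the paper's proof (which likewise leaves the final residue step implicit).

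Your first route (matching the $\ln|x+c_l|$ singularity at each $-c_l$) has a genuine gap beyond the pole-clearing bookkeeping you flag: the identity is only hypothesized on the interval $I$, and both sides are real-analytic only on $\mathbb{R}$ minus the finite set consisting of the poles and the points $-c_l$. Real-analytic continuation therefore propagates the identity only to the connected component of that open set containing $I$, and the two endpoints of that component are the only singularities you can actually reach. For any $-c_l$ separated from $I$ by another singular point, the asserted Laurent/log matching is not justified as stated (one would have to complexify and track branches, or argue differently); moreover the claim that ``a rational function times a logarithm can only match a rational function if its coefficient vanishes'' is itself the $k=1$ case of the lemma and still needs the differentiation argument. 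So lead with the differentiation proof and write it out rather than deferring it to a cited independence fact.
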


\begin{proof} We  first suppose that all of  $p_l(x)$ are   non-zero polynomials. In this case,
if $m \geq 0$ is the maximum among the degrees of $ p_1, \cdots,   p_l$, we  differentiate the equality $m$ times to get
$ \sum_{l=1}^k  p_l^{(m)}(x) \ln|x+ c_l| = R_m(x)$  for a rational function $R_m(x)$. Let $ p_l^{(m)}(x)= a_l$. Some $a_l$ are nonzero and we have
$ \sum_{l=1}^k  a_l \ln|x+ c_l| =R_m(x)$ on $I$. This quickly leads to $a_l=0$ and  $p_l(x) =0$ for all $l$ and
then $R(x)=0$.


\smallskip
Now suppose that $p_l$ are rational functions.
 Let $P(x)$ be  the least common multiple of the denominators of $p_l$.
We get $P(x)R(x) = \sum_{l=1}^k  P(x)p_l(x) \ln|x+ c_l|$. Now $P \cdot p_l$ is a polynomial.
By the above paragraph,  we know that  $P(x)R(x)= P(x)p_l(x) =0$.  As $P  \neq 0$, we get  $R(x)=0$ and $p_l(x) =0$.

\end{proof}

Apply Lemma \ref{rel} and Lemma \ref{L41} to (\ref{30b98}), and get $\delta_{l, s} =0 $,
$\gamma_1 =0$,   $\gamma_0 =\beta_0$,  $\gamma_{l, t}=0$ for $t \geq 2$, $\gamma_{l, 1}=\beta_{l}$.
From (\ref{30b92}) and   (\ref{803}),  the equality $\gamma_0 =\beta_0$ gives


\begin{eqnarray}  \label{80a}
\frac{-2R}{(n-1)}  \frac{\alpha_{m-2}}{m(m-1)}  +  \frac{R c_i }{n-1}  - \frac{2R}{n-1}  \sum_{1<j\leq m, j \neq i}(  \frac{-c_j}{2m}  + \frac{\alpha_{m-2} }{m(m-1)}) \nonumber \\
= \sum_{j >m} \{ \sum_{1< k \notin [i] \cup [j] }
     2  {\gamma}_{ij}^k  {\gamma}_{ji}^k \} \hspace{2cm}
   \end{eqnarray}
If $m=n$,   then the RHS of (\ref{80a}) vanishes.  If $m < n$,
we use Lemma \ref{jf1};    for each $[l]$ with $2 \leq m_l\leq m$,
$\sum_{i \in [l]} \sum_{j >m} \sum_{1< k \notin [i] \cup [j] }
      {\gamma}_{ij}^k  {\gamma}_{ji}^k=0.$  So, regardless of $m=n$ or  $m < n$, we get from (\ref{80a})

$\frac{-R}{(n-1)}\sum_{i \in [l]}   \big[   \frac{2\alpha_{m-2}}{m(m-1)} -  c_i   - \sum_{1<j\leq m, j \neq i} \frac{ c_j }{m}  + (m-2)\frac{2\alpha_{m-2} }{m(m-1)} \big] = 0.$


\medskip
\noindent
  Suppose $R \neq 0$. We can get
$  2 \alpha_{m-2}n_{[l]}-  c_{[l]} n_{[l]} m  - \sum_{i \in [l]}\{  -c_i   + \alpha_{m-2}   \}
=0,$ as $\alpha_{m-2}=\sum_{j=2}^m c_j .$
Then $c_{[l]} (m-1)= \alpha_{m-2}=\sum_{j=2}^m c_j $.  For two equivalent classes $[l_1] $ and $[l_2]$ with $2 \leq m_{l_k} \leq m$, $k=1,2$,  we get $c_{[l_1]} (m-1) = \alpha_{m-2}= c_{[l_2]} (m-1)$. So,  $c_{[l_1]}  = c_{[l_2]} $ and $[l_1] = [l_2]$.
Therefore there  exists exactly one distinct nonzero $\zeta_i$.
We proved
   \begin{lemma} \label{jf12} Suppose that there exists some nonzero $\zeta_i$.
  If $R \neq 0$, then there exists exactly one distinct nonzero $\zeta_i$.
   \end{lemma}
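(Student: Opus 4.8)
\textbf{Proof proposal for Lemma \ref{jf12}.}

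The plan is to extract a scalar identity from the equation LHS $=$ RHS of (\ref{30b92}) and show it forces all the nonzero $\zeta_i$ to have the same constant $c_i$, hence (by the equivalence relation (\ref{x3j0dk}) and (\ref{x4})) exactly one distinct nonzero $\zeta_i$. As explained in the text preceding the statement, after differentiating (\ref{30b98}) in $x_i$ and invoking Lemma \ref{L41} one learns that the functions $\beta_l(x)$, $\beta_0(x)$ on the right-hand side are in fact constants; then Lemma \ref{rel} applied to the $\ln|h+c_{[l]}|$ terms, together with another application of Lemma \ref{L41}, kills the logarithmic coefficients $\delta_{l,s}$, forces $\gamma_1=0$, $\gamma_{l,t}=0$ for $t\ge 2$, and matches the remaining constants: $\gamma_0=\beta_0$ and $\gamma_{l,1}=\beta_l$. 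So the first block of steps is simply to run this matching and isolate the constant-term equation; this yields (\ref{80a}).

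The second block is to clean up the right-hand side of (\ref{80a}). If $m=n$ there is no index $j>m$ and the right side is empty. If $m<n$, sum (\ref{80a}) over $i$ in a fixed class $[l]$ and apply Lemma \ref{jf1} (with the roles of $[i]$ and $[l]$ as there), which gives $\sum_{i\in[l]}\sum_{j>m}\sum_{1<k\notin[i]\cup[j]}\gamma_{ij}^k\gamma_{ji}^k=0$. Either way, the summed-over-$[l]$ version of (\ref{80a}) has vanishing right-hand side, leaving a purely algebraic relation among the constants $c_i$ and $\alpha_{m-2}=\sum_{j=2}^m c_j$. Using $R\neq 0$ to divide out, and simplifying the inner sum $\sum_{1<j\le m,\,j\neq i}c_j=\alpha_{m-2}-c_i$, this collapses to $c_{[l]}(m-1)=\alpha_{m-2}$ for every class $[l]$ with $2\le m_l\le m$.

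The conclusion is then immediate: the right-hand side $\alpha_{m-2}$ does not depend on $[l]$, so any two classes $[l_1],[l_2]$ satisfy $c_{[l_1]}(m-1)=c_{[l_2]}(m-1)$, hence $c_{[l_1]}=c_{[l_2]}$; since by (\ref{x4}) a nonzero $\zeta_i$ is determined by $c_i$, and by (\ref{x3j0dk}) distinct classes mean distinct $\zeta$-values, there is exactly one class, i.e. exactly one distinct nonzero $\zeta_i$. I expect the main obstacle to be purely bookkeeping rather than conceptual: carrying out the rational-function expansions (\ref{30g6}), (\ref{30b97}) and the substitution of (\ref{ff01})--(\ref{ff02}) cleanly enough that the constant term of (\ref{30b92}) is correctly identified as (\ref{80a}) — in particular keeping track of which cross terms land in the $1/(h+c_{[l]})$ part versus the constant part, and correctly handling the partial-fraction identity $\frac{1}{(h+c_j)(h+c_p)^t}=\frac{a_0}{h+c_j}+\sum_{k=1}^t\frac{a_k}{(h+c_p)^k}$ used to bring (\ref{803}) into the stated normal form. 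Once (\ref{80a}) is in hand, the remaining algebra and the final logical step are short.
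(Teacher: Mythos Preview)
Your proposal is correct and follows essentially the same route as the paper's proof: extract the constant-term identity $\gamma_0=\beta_0$ from (\ref{30b98}) via Lemmas \ref{L41} and \ref{rel} to obtain (\ref{80a}), sum over $i\in[l]$ and invoke Lemma \ref{jf1} to kill the right-hand side, then divide by $R\neq 0$ and simplify to $c_{[l]}(m-1)=\alpha_{m-2}$, forcing all classes to coincide. The bookkeeping you flag is indeed the only real work; conceptually your outline matches the paper step for step.
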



Now  we assume $R=0$.
Then (\ref{30b92}) and (\ref{803})   give
 \begin{eqnarray}  \label{80379h}
  \\
 2a    \sum_{l=1}^d  \sum_{t=1}^{n_{[l]}} \frac{\alpha_{l, t}}{(h+ c_{[l]})^{t}}   +\sum_{1<j\leq m, j \neq i}   \frac{1 }{(h+ c_j) } (h^{'}(0))^2  \hspace{5cm} \nonumber\\
 +\sum_{1<j\leq m, j \neq i}   \frac{1 }{(h+ c_j) } \big[ 2a\sum_{l=1}^d \{  \sum_{t=2}^{n_{[l]}} \frac{\alpha_{l, t}}{(1-t )(h+ c_{[l]})^{t-1}}+   \alpha_{l,1} \ln |h+ c_{[l]}| \} + 2a \tilde{c}_1      \big] \nonumber \\
 = \sum_{l=1}^d   \frac{\beta_{l}}{(h+ c_{[l]})} + \beta_0. \hspace{9.5cm} \nonumber   \end{eqnarray}

 From Lemma \ref{rel}, we get $ a \alpha_{l, 1}=0$ for each $l$,  and $\beta_0 =0$
If $a=0$, (\ref{ff01}) gives $h^{''} = f^{'} =0$, a contradiction to that $f$  is not a constant.

\smallskip
Assume $a \neq 0$. Then  $ \alpha_{l,1}=0$ for each $l$. By $H = \Pi_{l=1}^d (h+ c_{[l]})^{n_{[l]}}  $ and  (\ref{30g6}),   $n_{[l]} \geq 2$ for each $ l =1,  \cdots d$.
We write $ \sum_{1<j\leq m, j \neq i} \frac{1  }{(h+ c_j) }
 = \sum_{r=1}^d  \frac{ n_{[r]}}{(h+ c_{[r]}) } -    \frac{1}{h+c_{i}}$.

Now (\ref{80379h})  becomes
   \begin{eqnarray}  \label{80379d}
   \\
   \sum_{l=1}^d  \sum_{t=2}^{n_{[l]}} \frac{2a\alpha_{l, t}}{(h+ c_{[l]})^{t}} +  \sum_{1<j\leq m, j \neq i}   \frac{(h^{'}(0))^2 +2a \tilde{c}_1 }{(h+ c_j) }   \hspace{5cm} \nonumber\\
    +
     \sum_{r=1}^d \sum_{l=1}^d\sum_{t=2}^{n_{[l]}} \frac{ n_{[r]}}{(h+ c_{[r]}) } \frac{2a \alpha_{l, t}}{(1-t )(h+ c_{[l]})^{t-1}}-    \sum_{l=1}^d \sum_{t=2}^{n_{[l]}} \frac{1}{h+c_{i}}\frac{2a \alpha_{l, t}}{(1-t )(h+ c_{[l]})^{t-1}} \nonumber  \\
 = \sum_{l=1}^d   \frac{\beta_{l}}{(h+ c_{[l]})}. \hspace{10cm} \nonumber
   \end{eqnarray}


\medskip
Suppose that $ d \geq 2$.
As $n_{[l]} \geq 2$, for fixed $[l] \neq [i]$, the coefficient  of $\frac{1}{(h+ c_{[l]})^{n_{[l]}}}$
 in the LHS of  (\ref{80379d}) equals
$(2a  \alpha_{l, n_{[l]}}) (1+  \frac{n_{[l]} }{1- n_{[l]}} )$, which should vanish by Lemma \ref{L41}.
We obtain $ \alpha_{l, n_{[l]}} =0.$ But as defined in (\ref{30g6}),  $ \alpha_{l, n_{[l]}}\neq 0$, a contradiction.  We proved that $d=1$.
 We write
\begin{lemma} \label{ph0}
Suppose that there exists some nonzero $\zeta_i$.
  If $R = 0$, then there exists exactly one distinct nonzero $\zeta_i$.
\end{lemma}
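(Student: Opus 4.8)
\textbf{Proof plan for Lemma \ref{ph0}.}
The plan is to mimic the structure of the preceding argument for the case $R \neq 0$ (Lemma \ref{jf12}), but now exploiting the $R=0$ simplifications that have already been extracted in equations (\ref{80379h}) and (\ref{80379d}). First I would record the setup: by hypothesis there is a nonzero $\zeta_i$, so $m \geq 2$ and we may speak of the distinct equivalence classes $[m_1], \dots, [m_d]$ with multiplicities $n_{[l]}$, $\sum_l n_{[l]} = m-1$. The goal is to show $d = 1$. Since $R=0$, (\ref{30b92}) and (\ref{803}) reduce to (\ref{80379h}); applying Lemma \ref{rel} (the logarithm-independence lemma) to (\ref{80379h}) forces $a\alpha_{l,1}=0$ for every $l$ and $\beta_0 = 0$. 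Then I split on $a$: if $a=0$, equation (\ref{ff01}) yields $h'' = f' = 0$, contradicting that $f$ is non-constant; so $a \neq 0$, whence $\alpha_{l,1} = 0$ for all $l$, and since $\alpha_{l,n_{[l]}} \neq 0$ by the definition (\ref{30g6}) this forces $n_{[l]} \geq 2$ for each $l$.

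Next I would carry out the contradiction step assuming $d \geq 2$. Rewrite $\sum_{1<j\leq m,\, j\neq i}\frac{1}{h+c_j} = \sum_{r=1}^d \frac{n_{[r]}}{h+c_{[r]}} - \frac{1}{h+c_i}$ and substitute into (\ref{80379h}) to obtain (\ref{80379d}), which is now a pure rational-function identity in $h$ (all logarithmic terms have been killed). Pick any class $[l] \neq [i]$ — such a class exists because $d \geq 2$ — and examine the coefficient of the highest-order pole $\frac{1}{(h+c_{[l]})^{n_{[l]}}}$ on the left-hand side of (\ref{80379d}). This coefficient comes from two sources: the term $\frac{2a\alpha_{l,n_{[l]}}}{(h+c_{[l]})^{n_{[l]}}}$ and, in the double sum, the diagonal term $r = l$, which contributes $\frac{n_{[l]}}{h+c_{[l]}} \cdot \frac{2a\alpha_{l,n_{[l]}}}{(1-n_{[l]})(h+c_{[l]})^{n_{[l]}-1}} = \frac{2a\,n_{[l]}\,\alpha_{l,n_{[l]}}}{(1-n_{[l]})(h+c_{[l]})^{n_{[l]}}}$; the off-diagonal terms and the $\frac{1}{h+c_i}$ term contribute only lower-order poles at $c_{[l]}$ after partial-fraction expansion. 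Hence the net coefficient is $2a\alpha_{l,n_{[l]}}\bigl(1 + \frac{n_{[l]}}{1-n_{[l]}}\bigr) = \frac{2a\alpha_{l,n_{[l]}}}{1-n_{[l]}}$. The right-hand side of (\ref{80379d}) has only simple poles, so by Lemma \ref{L41} this coefficient must vanish; since $a \neq 0$ and $n_{[l]} \geq 2$, we get $\alpha_{l,n_{[l]}} = 0$, contradicting $\alpha_{l,n_{[l]}} \neq 0$ from (\ref{30g6}). Therefore $d = 1$, which is precisely the assertion that there is exactly one distinct nonzero $\zeta_i$.

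The step I expect to be the main obstacle is the bookkeeping in isolating the top-order pole coefficient in (\ref{80379d}): one must be careful that when $c_i \neq c_{[l]}$ the partial-fraction expansion of $\frac{1}{(h+c_i)(h+c_{[l]})^{n_{[l]}-1}}$ produces a pole at $c_{[l]}$ of order only $n_{[l]}-1$ (strictly lower), and likewise for the cross terms $r \neq l$ in the double sum, so that indeed \emph{only} the two identified terms feed the $(h+c_{[l]})^{-n_{[l]}}$ coefficient. This is exactly the kind of elementary partial-fraction fact invoked in the paragraph after (\ref{803}), so it is routine but must be stated cleanly to make the application of Lemma \ref{L41} legitimate. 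Everything else — the dichotomy on $a$, the use of (\ref{ff01}), and the final translation "$d=1$ $\Leftrightarrow$ exactly one nonzero $\zeta_i$" via the equivalence relation (\ref{x3j0dk}) — is immediate.
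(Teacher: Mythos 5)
Your proposal is correct and follows essentially the same route as the paper: reduce to (\ref{80379h}) via $R=0$, kill the logarithms with Lemma \ref{rel}, rule out $a=0$ via (\ref{ff01}), deduce $n_{[l]}\geq 2$, and then extract the top-order pole coefficient $2a\alpha_{l,n_{[l]}}\bigl(1+\tfrac{n_{[l]}}{1-n_{[l]}}\bigr)$ in (\ref{80379d}) to contradict $\alpha_{l,n_{[l]}}\neq 0$ when $d\geq 2$. The partial-fraction bookkeeping you flag as the main obstacle is exactly the point the paper relies on, and your treatment of it is sound.
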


Lemma \ref{jf12} and Lemma \ref{ph0} give

\begin{prop} \label{ph1}
Suppose that there exists some nonzero $\zeta_i$.
Then there exists exactly one distinct nonzero $\zeta_i$.
\end{prop}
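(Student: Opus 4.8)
The plan is to combine the two cases already separated in the excerpt — the case $R\neq 0$ handled in Lemma \ref{jf12} and the case $R=0$ handled in Lemma \ref{ph0} — into a single statement. Since every Riemannian metric has a well-defined scalar curvature $R$, exactly one of the alternatives $R\neq 0$ or $R=0$ holds on a connected component of $M_r\cap\{\nabla f\neq 0\}$ (recall $R$ is constant there — or at worst, work on the open dense set where $\mathrm{sgn}(R)$ is locally constant). Under the standing hypothesis that some $\zeta_i\neq 0$, Lemma \ref{jf12} says that in the first case there is exactly one distinct nonzero value among the $\zeta_i$, and Lemma \ref{ph0} says the same in the second case. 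So the conclusion of Proposition \ref{ph1} follows immediately by case division, with no further computation required.

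Concretely, I would write: fix a connected component of $M_r\cap\{\nabla f\neq 0\}$ and suppose some $\zeta_i\neq 0$. If $R\neq 0$, apply Lemma \ref{jf12}; if $R=0$, apply Lemma \ref{ph0}. In either case we conclude that there is exactly one distinct nonzero $\zeta_i$, i.e. all the nonzero $\zeta_i$ coincide (equivalently, in the notation of the rearrangement in Section 3 where $\zeta_2,\dots,\zeta_m$ are the nonzero ones, we have $\zeta_2=\dots=\zeta_m$, so $d=1$ in the notation of Section 6). This also records that the equivalence classes $[m_1],\dots,[m_d]$ among $\{2,\dots,m\}$ collapse to a single class.

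There is essentially no obstacle here, since both halves of the argument are already done; the only thing to be careful about is making sure the dichotomy $R\neq 0$ versus $R=0$ is exhaustive and that both lemmas are invoked under their stated hypothesis (existence of a nonzero $\zeta_i$), which is exactly the hypothesis of Proposition \ref{ph1}. A brief proof suffices:

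\begin{proof}
Suppose there exists some nonzero $\zeta_i$. Either $R\neq 0$ or $R=0$. If $R\neq 0$, Lemma \ref{jf12} shows that there is exactly one distinct nonzero $\zeta_i$. If $R=0$, Lemma \ref{ph0} gives the same conclusion. In both cases the claim follows.
\end{proof}
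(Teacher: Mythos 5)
Your proof is correct and is exactly the paper's own argument: the proposition is stated immediately after Lemma \ref{jf12} and Lemma \ref{ph0} and is obtained by precisely this dichotomy on $R$ (which is a genuine dichotomy since $R$ is constant for a vacuum static space). Nothing further is needed.
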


{\bf Proof of Theorem \ref{locals}}:
Recall from (\ref{F32b2}) that
if  $\zeta_i =0$ for all $i=2, \cdots, n$, then $(M, g)$ has at most two distinct Ricci eigenvalues at each point, say $\lambda_1$ and $\lambda_2$.
So, by Proposition \ref{ph1}, a vacuum static space $(M, g, f)$ with harmonic curvature can have at most three distinct Ricci eigenvalues, i.e. $\lambda_1,  \lambda_2, \lambda_n$; here $ \lambda_n$ corresponds to $\zeta_n=0$.
We can use the classification  result  of vacuum static spaces with harmonic curvature and  at most three distinct Ricci eigenvalues  in \cite{Ki2, Ki3}.
Therefore, we proved Theorem \ref{locals}.

\medskip
{\bf Proof of Corollary \ref{complete}}: This corollary can be proved from Theorem \ref{locals} and the real analyticity of $(M, g, f)$ by a standard argument. We omit the details.



\bigskip \noindent
\footnotesize{ Jongsu Kim: Dept. of Math., Sogang University, Seoul, Korea; \ \ jskim@sogang.ac.kr}


\begin{thebibliography}{00}

\bibitem{Am} L. Ambrozio,  {\it On static three-manifolds with positive scalar curvature}, Jour. Diff. Geom.,
 {\bf 107} (2017), no.1,  1-45.    https://doi.org/10.4310/jdg/1505268028.


\bibitem{ACD}  M. Anderson, P. Chrusciel, E. Delay,  {\it  Non-trivial, static, geodesically complete,
vacuum space-times with a negative cosmological constant}, J. High Energy Phys. (2002)
no. 10, 063, 27 pp.    https://doi.org/10.1088/1126-6708/2002/10/063


\bibitem{BBBV}  H. Baltazar, A. Barros, R. Batista, E. Viana,  {\it  On static manifolds and related critical spaces with zero radial Weyl curvature}, Monatshefte f\"ur Mathematik,  {\bf 191} (2020), 449–463. https://doi.org/10.1007/s00605-019-01365-8.








\bibitem{Be}  A. L. Besse,  {\it  Einstein manifolds}. Ergebnisse der Mathematik, 3 Folge, Band 10, Springer-Verlag, (1987).



\bibitem{Bou} J.P. Bourguignon,  {\it Une stratifcation de l’espace des structures riemanniennes}, Compositio
Math., {\bf  30} (1975), 1-41.


\bibitem{BM} G. Bunting, A. Masood-ul-Alam,  {\it  Nonexistence of Multiple Black Holes in Asymp-
totically Euclidean Static Vacuum Space-Time}, General Relativity and Gravitation, {\bf  19}
(1987), 147-154.

\bibitem{CC} H.D. Cao, Q. Chen,  {\it  On Bach-flat gradient shrinking Ricci solitons}, Duke
Math. J., {\bf  162} (2013), 1149--1169. https://doi.org/10.1215/00127094-2147649

\bibitem{CS} P. Chrusciel, W. Simon,  {\it  Towards the classiﬁcation of static vacuum spacetimes
with negative cosmological constant}, J. Math. Phys., {\bf  42} (2001), no. 4,  1779-1817.


\bibitem{CIP} P.T.  Chrusciel, J. Isenberg,  D. Pollack,   {\it Initial  data  engineering},  Comm. Math.  Phys., {\bf 257:1} (2005), 29–42. https://doi.org/10.1007/s00220-005-1345-2

\bibitem{Co} J. Corvino,  {\it   Scalar curvature deformation and a gluing construction for the Einstein constraint equations}, Comm. Math. Phys., {\bf   214} (2000), 137-189.



\bibitem{CEM} J. Corvino, M. Eichmair, P. Miao,  {\it  Deformation of scalar curvature and volume}, Math. Annalen,, {\bf 357} (2013), no. 2,  551-584. https://doi.org/10.1007/s00208-013-0903-8


\bibitem{De} A. Derdzi\'{n}ski,  {\it  Classification of Certain Compact Riemannian Manifolds
with Harmonic Curvature and Non-parallel Ricci Tensor}, Math. Zeit., {\bf   172}  (1980), 273-280.


\bibitem{DG} D. DeTurck, H. Goldschmidt,  {\it  Regularity theorems in Riemannian geometry. II. Harmonic curvature and the Weyl tensor}, Forum Math.,  {\bf 1} (1989), 377-394.


\bibitem{FM} A. Fischer, J. Marsden, {\it  Deformations of the scalar curvature}, Duke Math. J., {\bf  42} (1975), no. 3, 519–547.


\bibitem{HE} S.W. Hawking, G.F.R. Ellis,  {\it  The large scale structure of space-time},  Cambridge University Press (1973).



\bibitem{HY2} S. Hwang, G. Yun,  {\it Vacuum static spaces with vanishing of complete divergence of Bach tensor
and Weyl tensor},  Jour. Geom. Anal., {\bf   31} (2021),  3060-3084.    https://doi.org/10.1007/s12220-020-00384-4


\bibitem{Is} W. Israel,  {\it Event horizons in static vacuum space-times}, Phys. Rev., {\bf  164} (1967), 1776-1779. https://doi.org/10.1103/PhysRev.164.1776

\bibitem{JEK} P. Jordan, J. Ehlers, W. Kundt,  {\it  Repbulication of: Exact solutions of field equations of the general theory of relativity}, Gen. Relativity and Gravitation, {\bf  41} (2009), 2191-2280.




\bibitem{KS} J. Kim, J. Shin,  {\it  Four-dimensional static and related critical spaces with harmonic curvature}, Pacific Jour. of Math., {\bf  295} (2018), no. 2,  429-462.  https://doi.org/10.2140/pjm.2018.295.429.


\bibitem{Ki2} J. Kim,  {\it Static and related critical spaces with harmonic curvature and three Ricci eigenvalues},  Jour. of Korean Math. Soc., {\bf 57} (2020), no. 6,    1435–1449. https://doi.org/10.4134/JKMS.j190685

\bibitem{Ki3} J. Kim,  {\it Erratum to ``Static and related critical spaces with harmonic curvature and three Ricci eigenvalues"},  Jour. of Korean Math. Soc., {\bf  59}  (2022), no. 3,  649-650.

\bibitem{Ki4} J. Kim,  {\it  Gradient Ricci solitons  with harmonic Weyl curvature}, (2023). http://arxiv.org/abs/2307.12243.

\bibitem{Ko} O. Kobayashi,  {\it  A differential equation arising from scalar curvature fucntion}, Jour. Math. Soc. of Japan, {\bf 34}   (1982), no. 4,  665-675.   https://doi.org/10.2969/jmsj/03440665


\bibitem{KOb}  O. Kobayashi, M. Obata,  {\it  Conformally-ﬂatness and static space-time}, in: J. Hano, A. Morimoto, S. Murakami, K. Okamoto, H. Ozeki (Eds.), Manifolds and Lie
groups, Notre Dame, Ind.,  Progr. Math. 14, Birkh\"{a}user, Boston, (1981), pp. 197–206.


\bibitem{La} J. Lafontaine, {\it  Sur la ge'ometrie d'une generalisation de l'equation differentielle d'Obata}, J. Math. Pures Appl., {\bf  62} (1983), no. 9,  63-72.

\bibitem{Li} L. Lindblom,  {\it  Some properties of static general relativistic stellar models}, J. Math. Phys., {\bf
21} (1980), no. 6,  1455–1459.






\bibitem{QY}   J.  Qing,  W.  Yuan,  {\it  A  note  on  static  spaces  and  related  problems},  J.  Geom.
Phys., {\bf  74}  (2013), 18–27.   https://doi.org/10.1016/j.geomphys.2013.07.003

\bibitem{QY2} J. Qing, W. Yuan,  {\it  On scalar curvature rigidity of vacuum static spaces}, Math. Annalen,   {\bf 365} (2016), no. 3-4, 1257-1277. https://doi.org/10.1007/s00208-015-1302-0




\bibitem{She} Y. Shen,   {\it  A note on Fischer-Marsden’s conjecture}, Proc. Amer. Math. Soc., {\bf  125} (1997), no. 3,  901–905.

\bibitem{Sh} J. Shin,  {\it  On the classification of $4$-dimensional $(m, \rho)$-quasi-Einstein manifolds with harmonic Weyl curvature}, Annals of Global Analysis and Geometry, {\bf 51}  (2017), no. 4,  379-399.  https://doi.org/10.1007/s10455-017-9542-8

\bibitem{Wad} R.M. Wald,  {\it  General Relativity}, Chicago, U. Chicago Press, (1984).

\bibitem{Ye}  J. Ye,   {\it  Closed Vacuum Static Spaces with Zero Radial Weyl Curvature}, Jour. Geom. Anal., {\bf 33}:64 (2023),   https://doi.org/10.1007/s12220-022-01119-3.



\end{thebibliography}
\end{document}